\definecolor{Bath}{RGB}{0,59,209}
\newcommand*\bigcdot{\mathpalette\bigcdot@{.5}}
\newcommand*\bigcdot@[2]{\mathbin{\vcenter{\hbox{\scalebox{#2}{$\m@th#1\bullet$}}}}}
\newcommand{\norm}[1]{\left\lVert#1\right\rVert}
\newcommand{\normm}[1]{\lVert#1\rVert}
\newcommand{\hvarphi}{\hat{\varphi}}
\newcommand{\tgamma}{\tilde{\gamma}}
\newcommand{\vecpar}[1]{\frac{\partial}{\partial x^{#1}}}
\DeclareMathOperator*{\esssup}{ess\,sup}
\newcommand{\hiip}[2]{\langle #1 , #2 \rangle}
\newtheorem{thm}{Theorem}
\newtheorem{lem}[thm]{Lemma}
\newtheorem{rem}[thm]{Remark}
\newtheorem{defn}[thm]{Definition}
\newtheorem{prop}[thm]{Proposition}
\providecommand{\keywords}[1]
{
  \small	
  \textbf{\textit{Keywords---}} #1
}
\providecommand{\msc}[1]
{
  \small	
  \textbf{\textit{MSC---}} #1
}
\begin{document}

\title{\vspace{-3cm}The $\infty$-elastica problem on a Riemannian manifold\vspace{-0.2cm}}
\author{
Ed Gallagher \thanks{Department of Mathematical Sciences, University of Bath, Bath, BA2 7AY, UK. Email: \href{redg22@bath.ac.uk}{redg22@bath.ac.uk}}
\and
Roger Moser \thanks{Department of Mathematical Sciences, University of Bath, Bath, BA2 7AY, UK. Email: \href{rm257@bath.ac.uk}{rm257@bath.ac.uk}}
}
\date{ \vspace{-1cm}}

\maketitle


\begin{abstract}
\noindent We consider the following problem: on any given complete Riemannian manifold $(M,g)$, among all curves which have fixed length as well as fixed end-points and tangents at the end-points, minimise the $L^\infty$ norm of the curvature. We show that the solutions of this problem, as well as a wider class of curves, must satisfy a second order ODE system. From this system we obtain some geometric information about the behaviour of the curves.
\end{abstract}

\keywords{elastica, $L^\infty$ variational problem, curvature, Riemannian manifold}

\msc{49Q20, 53A04, 58E10}

\section{Introduction}\label{sec:intro}

\noindent The study of so-called ``variational problems in $L^\infty$'' originated in the 1960s in a series of papers by Aronsson \cite{aronsson1965minimization} \cite{aronsson1966minimization} \cite{aronsson1969minimization} with a focus on first order functionals; these have now been studied extensively. In contrast, second order functionals have a significantly shorter history, having started to be investigated only relatively recently. In the one-dimensional case, problems of second order and higher (with no immediate geometric context) are touched on in \cite{aronsson2010certain} and \cite{aronsson2012variational}. In \cite{katzourakis2019existence}, a variational problem depending on second derivatives only through the Laplacian is studied, and in \cite{katzourakis2020second} general second-order $L^\infty$ variational problems concerning the minimisation of an arbitrary $C^1$ function of the Hessian $D^2 u$ are examined. \\
\noindent Specific $L^\infty$ variational problems arising from geometric motivations have been studied for longer than the generic second order problems, and some of these can be considered second order in the sense that (say) the quantities of interest require second derivatives when expressed in local co-ordinates. Some examples include \cite{moser2012minimizers}, where surfaces minimising a weighted $L^\infty$ norm of the Gauss curvature are considered, and \cite{sakellaris2017minimization} where minimisers among a conformal class of the $L^\infty$ norm of the scalar curvature are studied for manifolds of dimension at least three. \\

\noindent In this paper, we consider the problem of minimising the $L^\infty$ norm of the curvature of a curve subject to boundary conditions and a side constraint. Specifically, we consider the problem with the following setup: let $(M,g)$ be a complete Riemannian manifold, and fix two points $x_1, x_2 \in M$ as well as two unit tangent vectors $v_1 \in T_{x_1}M$, $v_2 \in T_{x_2}M$ at these points. Fix also a length $L \geq d(x_1,x_2)$, where $d(\bigcdot \, , \, \bigcdot)$ denotes the Riemannian distance function on $M$. We identify curves in $M$ with their arclength parametrisation $\gamma: [0,L] \rightarrow M$ with $\vert \gamma' \vert \, = \vert T \vert \, \equiv 1$, and the (unsigned) curvature is given by $\kappa = \vert \nabla_T T \vert$. Note that the arclength condition affects the problem only superficially since curvature is invariant under reparametrisation. Assuming that $\gamma$ belongs to the Sobolev space $W^{2,\infty}((0,L); M)$-- the natural space to work in for this problem, which we will denote by the shorthand $W^{2,\infty}(0,L)$ or $W^{2,\infty}$ in places where the meaning is clear-- we introduce the functional
\[
\mathcal{K}_\infty \left[ \gamma \right] = \esssup_{s \in [0,L]} \, \vert \nabla_T T \vert.
\]
Denoting by $\mathcal{G}$ the set of all curves in $W^{2,\infty}(0,L)$ which satisfy the boundary conditions
\begin{equation}\label{eq:bcs}
\gamma(0) = x_1, \quad \gamma(L) = x_2, \quad \gamma'(0) = v_1, \quad \gamma'(L) = v_2
\end{equation}
and the length constraint
\begin{equation}\label{eq:length}
\mathcal{L} \left[ \gamma \right] = L,
\end{equation}
as well as the arclength condition $\vert \gamma' \vert \, \equiv 1$ (that is, $\mathcal{G}$ is the set of admissible curves for the problem), we seek to understand the behaviour of minimisers of $\mathcal{K}_\infty$ in $\mathcal{G}$. As we will see later on, such minimisers always exist and so the problem is always meaningful. \\

\noindent Note that here we are defining the space $W^{2,\infty}((0,L); M)$ as the space of all functions from $(0,L)$ to $M$ whose expressions in suitable co-ordinate charts belong to the Euclidean Sobolev space $W^{2,\infty}$, with similar definitions for the spaces $W^{2,q}$ with $1 \leq q < \infty$. \\

\noindent This problem has not been considered in full generality previously, though specific cases of it and problems of a similar nature have been investigated. The problem in $L^p$ has been considered before, e.g. in \cite{huang2004note} which focuses on manifolds of constant sectional curvature, and it has been studied extensively in the case $p=2$ where the theory originated with the Bernoulli brothers as well as Euler \cite{oldfather1933leonhard}. However, the change from the finite $p < \infty$ to the infinite $p = \infty$ gives rise to a fundamentally different problem for which none of the usual tools of the calculus of variations apply. \\
\noindent In Euclidean space $\mathbb{R}^n$, the $\infty$-elastica problem has already been studied by Moser \cite{moser2019structure}. The main novelty in the present paper, therefore, is the generalisation to arbitrary complete manifolds. Somewhat surprisingly, given that the $L^\infty$ norm is not differentiable in any material sense of the word, \cite{moser2019structure} shows that solutions of the Euclidean problem (as well as a more general class of curves) are characterised by a system of first order ODEs, derived from the limit as $p \rightarrow \infty$ of the Euler-Lagrange equations for the $L^p$ problem. Moreover, the solutions admit a classification based on their structure, being either a chain of two-dimensional curves or a single three-dimensional curve. \\

\noindent As well as minimisers of $\mathcal{K}_\infty$, the analysis in this paper also lets us draw conclusions about a weaker class of ``pseudo-minimisers'', called $\infty$-elastica, which are a sort of analogue of critical points for the $L^\infty$ norm. Due to the definition of the $L^\infty$ norm there is no meaningful way to define critical points in the traditional sense. The underlying idea is that as well as minimisers of $\mathcal{K}_\infty$, the definition of an $\infty$-elastica also captures `weaker' curves which minimise a modified $\mathcal{K}_\infty$ functional:
\begin{defn}\label{defn:inftyelastica}
A curve $\gamma \in \mathcal{G}$ is called an $\infty$-elastica if there exists a constant $M \in \mathbb{R}$ such that the inequality
\begin{equation}\label{eq:inftyelasdefn}
\mathcal{K}_\infty \left[\gamma\right] \leq \mathcal{K}_\infty \left[\tgamma\right]  + \frac{M}{2L} \int_0^{L} d\left(\tgamma(s), \gamma([0,L]) \right)^2 \, \mathrm{d}s  
\end{equation}
is satisfied for all $\tgamma \in \mathcal{G}$. \\
Here, the function $d$ denotes the (geodesic) distance between the point $\tgamma(s)$ and the set $\gamma([0,L])$, i.e. the image of the curve $\gamma$.
\end{defn}
\noindent Although we use the same language (``$\infty$-elastica'') as \cite{moser2019structure}, our definition is in fact slightly different (cf. \cite[Definition~1]{moser2019structure}), although both definitions capture the minimisers of $\mathcal{K}_\infty$. \\

\noindent The purpose of this paper is to prove the following theorem, which extends some of the theory from \cite{moser2019structure} from Euclidean space $\mathbb{R}^n$ to an arbitrary complete Riemannian manifold $(M,g)$:
\begin{thm}\label{thm:limitingeqns}
Assume that $\gamma \in \mathcal{G}$ is an $\infty$-elastica and that $\gamma$ is not a geodesic. Let $K = \mathcal{K}_\infty \left[ \gamma \right]$. Then the following statements hold:\\
1) There exist a vector field $\varphi \in W^{2,\infty}_{\text{loc}}(0,L) \backslash \{ 0 \}$ defined along $\gamma$ and a number $\lambda \in \mathbb{R}$ such that the equations 
\begin{align} 
\nabla_{T}^2 \varphi + R\left(\varphi,T\right)T &= L \lambda \nabla_{T} T - 2\nabla_T \left( \left\langle \varphi, \nabla_{T} T \right\rangle T \right), \label{eq:EL1} \\
\vert \varphi \vert \nabla_T T &= K \varphi, \label{eq:EL2}
\end{align}
are satisfied a.e. in $(0,L)$. \\
2) The curvature of $\gamma$ takes on only two values up to null sets: $\kappa = K$ a.e. on the set where $\varphi \neq 0$, and $\kappa = 0$ a.e. on the set where $\varphi = 0$. \\
In particular, 1) and 2) hold when $\gamma$ is a minimiser of $\mathcal{K}_\infty$.
\end{thm}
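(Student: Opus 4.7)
The plan is to adapt the $L^p$-approximation strategy of \cite{moser2019structure} to the Riemannian setting. The functional $\mathcal{K}_\infty$ is not Gateaux differentiable, but $\mathcal{K}_p[\tilde{\gamma}] := \|\nabla_T T\|_{L^p(0,L)}$ is, for $p \in (1,\infty)$. I would therefore minimise a penalised $L^p$ version, derive its Euler-Lagrange system (which on a manifold carries terms from the Riemann curvature tensor), and pass to the limit $p \to \infty$. The $\infty$-elastica inequality in Definition~\ref{defn:inftyelastica} is what replaces strict minimality and supplies the quantitative control needed to force approximants to converge to $\gamma$.

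Concretely, for each sufficiently large $p$ I would minimise the penalised functional
\begin{equation*}
\mathcal{F}_p[\tilde{\gamma}] = \mathcal{K}_p[\tilde{\gamma}] + \frac{M}{2L}\int_0^L d\bigl(\tilde{\gamma}(s), \gamma([0,L])\bigr)^2 \, \mathrm{d}s
\end{equation*}
over $\mathcal{G}$, obtaining minimisers $\gamma_p$ via the direct method in $W^{2,p}$. Comparing $\mathcal{F}_p[\gamma_p] \leq \mathcal{F}_p[\gamma]$ with the $\infty$-elastica inequality applied to $\tilde{\gamma} = \gamma_p$ should force $\gamma_p \to \gamma$ uniformly, the penalty term to zero, and $\mathcal{K}_p[\gamma_p] \to K$. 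I would then compute the first variation of $\mathcal{F}_p$ along exponential-map variations $\gamma_p^\epsilon(s) = \exp_{\gamma_p(s)}(\epsilon X(s))$, introducing a Lagrange multiplier $\lambda_p$ for the length constraint and a scalar function encoding the arclength condition. Using the standard identities $\partial_\epsilon \nabla_T T|_{\epsilon=0} = \nabla_T^2 X + R(X,T)T$ and $\partial_\epsilon |T|^2|_{\epsilon=0} = 2\langle \nabla_T X, T\rangle$, this yields an Euler-Lagrange equation for $\varphi_p$ (a scalar multiple of $|\nabla_T T|^{p-2}\nabla_T T$) having exactly the structure of (\ref{eq:EL1}), plus a small error $E_p$ from the penalty.

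Next comes the passage to the limit. Setting $c_p := \|\varphi_p\|_{L^1}$ and normalising $\hat{\varphi}_p := \varphi_p/c_p$, $\hat{\lambda}_p := \lambda_p/c_p$, uniform bounds on the Euler-Lagrange system combined with elliptic regularity should yield a subsequential limit $(\varphi, \lambda)$ satisfying (\ref{eq:EL1}). For the alignment (\ref{eq:EL2}), observe that $\hat{\varphi}_p$ is essentially $(|\nabla_T T|/K_p)^{p-2}\nabla_T T$ up to normalisation, where $K_p := \mathcal{K}_\infty[\gamma_p]$; the factor $(|\nabla_T T|/K_p)^{p-2}$ concentrates on the set where $|\nabla_T T|$ approaches $K$, forcing $\varphi$ to be aligned with $\nabla_T T$ at magnitude $K$ in the limit. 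Part 2 then follows immediately from (\ref{eq:EL2}): on $\{\varphi \neq 0\}$ one reads off $|\nabla_T T| = K$ directly, while on $\{\varphi = 0\}$ any set of positive measure supporting $\kappa \in (0,K)$ would contribute surviving mass to $\hat{\varphi}_p$ and contradict its vanishing in the limit.

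The main obstacle is concentrated in this limit step. Two technical hurdles stand out. First, one must rule out the degenerate outcome $\varphi \equiv 0$, and it is precisely the hypothesis that $\gamma$ is not a geodesic (hence $\nabla_T T \not\equiv 0$) that guarantees $\varphi_p$ retains enough mass to survive normalisation. Second, the ambient curvature term $R(\varphi_p, T)T$, which is absent from the Euclidean analysis of \cite{moser2019structure}, must be controlled uniformly in $p$; this couples the geometry of $(M,g)$ nontrivially into the a priori estimates for $\varphi_p$ and $\lambda_p$. Additionally, compactness for vector fields along the varying curves $\gamma_p$ requires trivialising the tangent bundle by parallel transport along an auxiliary frame, a genuinely Riemannian complication invisible in the flat case.
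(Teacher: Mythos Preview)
Your overall strategy matches the paper's: approximate by penalised $L^p$ problems, derive Euler--Lagrange equations carrying the Riemann curvature term, normalise, and pass to the limit. Two points, however, deserve attention.

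First, a technical but genuine issue: penalising by $d(\tilde{\gamma}(s),\gamma([0,L]))^2$ as in Definition~\ref{defn:inftyelastica} only forces the image of $\gamma_p$ to lie near the image of $\gamma$, not that $\gamma_p(s)$ is close to $\gamma(s)$. The paper replaces the full image by a short \emph{segment} $\text{seg}_\gamma(s)$ of length $c(K)/2$, where $c(K)$ is the minimal length of a closed curve with curvature bounded by $K$; this is precisely what rules out $\gamma_p$ converging to a reparametrisation or a different curve with the same trace. Without this modification your convergence step ``$\gamma_p \to \gamma$ uniformly'' is not justified.

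Second, your argument for Part~2 on the set $\{\varphi=0\}$ is incorrect. You claim that ``any set of positive measure supporting $\kappa\in(0,K)$ would contribute surviving mass to $\hat\varphi_p$,'' but the opposite is true: at a point where $\kappa_p \to c \in (0,K)$, the factor $(\kappa_p/K_p)^{p-1}$ decays to zero exponentially in $p$, so such points contribute \emph{no} surviving mass and there is no contradiction. The paper instead argues purely from the limiting ODE system: substituting \eqref{eq:EL2} into \eqref{eq:EL1} at points of $\{\varphi=0\}$, one finds (when $\lambda\neq0$) that either $\nabla_T^2\varphi=0$ there, which forces $\nabla_T T=0$ by taking the inner product with $\nabla_T T$, or else the zero is isolated; when $\lambda=0$ the equation yields a differential inequality $|\phi|' \le C|\phi|$ for $\phi=(\varphi,\varphi')$, and Gr\"onwall's inequality shows the zeros of $\varphi$ are isolated. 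Either way the set $\{\varphi=0\}\cap\{\kappa>0\}$ is null. This ODE argument, not concentration, is the correct mechanism.
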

\noindent Note that $\kappa$ denotes the \textit{unsigned} curvature, so it is possible that the normal vector may suddenly reverse direction in places. We therefore cannot expect any higher regularity that an $\infty$-elastica will possess any higher regularity than $W^{2,\infty}$ in general. The classification results of \cite{moser2012minimizers} demonstrate this: for curves in the plane, we can take the boundary and length conditions to be such that any singular circular arc is not admissible for the problem. Then a minimiser of $\mathcal{K}_\infty$-- which is an $\infty$-elastica in both the sense of this paper and the sense of \cite{moser2019structure}-- must have its normal vector suddenly reverse direction somewhere by \cite[Theorem~4]{moser2019structure}, i.e. its second derivative must be discontinuous. \\

\noindent Theorem \ref{thm:limitingeqns} opens up the possibility of further analysis of $\infty$-elastica through the ODE system \eqref{eq:EL1}--\eqref{eq:EL2} in a manner similar to that of \cite{moser2019structure}; however, such an analysis would most likely require us to restrict our attention to a single manifold $M$ at a time, and would potentially only work for more `well-behaved' and well-understood manifolds (e.g. the sphere $\mathbb{S}^n$ or hyperbolic space $\mathbb{H}^n$). Since the focus of this paper is on the general case, we do not consider this analysis here.

\noindent In the context of Theorem \ref{thm:limitingeqns}, and throughout this paper, the condition $\varphi \in W^{2,\infty}_{\text{loc}}(0,L)$ means that $\varphi \in W^{2,\infty}([a,b])$ for any $[a,b] \subset (0,L)$, where the space $W^{2,\infty}([a,b])$ may be interpreted as the space of all vector fields whose local co-ordinate representations as functions to $\mathbb{R}^n$ are in $W^{2,\infty}$ in suitable co-ordinate charts. It will be clear from context whether we are talking about a function or a vector field being in their respective Sobolev spaces. The restriction that $\gamma$ is not a geodesic does not take anything away from the problem, since if the conditions \eqref{eq:bcs}--\eqref{eq:length} allow geodesics as admissible curves then it is immediate that the \textit{only} minimisers of $\mathcal{K}_\infty$ will be such geodesics.\\

\noindent Intriguingly, the results of Theorem \ref{thm:limitingeqns} are different in some regards to the results for the problem in Euclidean space from \cite{moser2019structure}. Most notably, the ODE system \eqref{eq:EL1}--\eqref{eq:EL2} is of second order, compared to the first order system in \cite[Theorem~2]{moser2019structure}. This difference appears since in Euclidean space it is possible to consider everything in terms of the unit tangent vector field rather than the curve itself-- indeed, the curve can be recovered from only the initial data and tangent field via integration-- while such an approach is not possible on an arbitrary manifold. Moreover, the connection between the ODE system and $\infty$-elastica in \cite{moser2019structure} is an equivalence, whereas here the ODE system \eqref{eq:EL1}--\eqref{eq:EL2} has only been shown to be a necessary condition for $\infty$-elastica; this is a consequence of the specific way we have defined $\infty$-elastica, as the proof equivalence in \cite{moser2019structure} leans heavily on the (different) definition of $\infty$-elastica there, although the question of whether we in fact have equivalence in Theorem \ref{thm:limitingeqns} is unresolved. \\


\noindent The paper is structured as follows. In Section \ref{sec:approx}, we introduce the penalisation term to our analysis, we approximate the $L^\infty$ problem by an $L^p$ one for which the Euler-Lagrange equations make sense, and we compute the Euler-Lagrange equations. In Section \ref{sec:eqns}, we allow $p$ to go to infinity and show that the `limiting' version \eqref{eq:EL1}--\eqref{eq:EL2} of the Euler-Lagrange equations remains when we take the limit, beginning the proof of Theorem \ref{thm:limitingeqns}. In Section \ref{sec:props}, we continue the proof of Theorem \ref{thm:limitingeqns}, showing that the vector field $\varphi$ satisfying equations \eqref{eq:EL1}--\eqref{eq:EL2} is not identically zero and investigating what equations \eqref{eq:EL1}--\eqref{eq:EL2} tell us about $\infty$-elastica. We show that minimisers of $\mathcal{K}_\infty$ exist.


\section{Approximation of the Problem}\label{sec:approx}

\noindent In this section we consider a version of our problem where the $L^\infty$ norm is replaced by an $L^p$ norm, with the view that we will later let $p \rightarrow \infty$ and recover some information about the $L^\infty$ problem. In general, as $p \rightarrow \infty$ we would expect to recover \textit{a} solution of the problem but not necessarily \textit{all} solutions of the problem; to overcome this, we add a penalisation term to our analysis similar to the one in \cite{moser2019structure} (albeit with a slightly different form), so that we can guarantee convergence to any given solution of the problem. We compute the Euler-Lagrange equations for the $L^p$ problem with this penalisation term added. \\

\noindent Recall that we are considering arclength curves $\gamma: [0,L] \rightarrow M$ satisfying the boundary conditions \eqref{eq:bcs} and length constraint \eqref{eq:length}, along with the curvature functional $\mathcal{K}_\infty$. \\

\noindent In practice we will not work directly with Definition \ref{defn:inftyelastica} when dealing with $\infty$-elastica; instead we will make use of another inequality similar to \eqref{eq:inftyelasdefn}. Whenever the inequality in Definition \ref{defn:inftyelastica} is satisfied, so too will this other inequality hold. In this sense, Definition \ref{defn:inftyelastica} is not the strongest possible definition we could use. However, it has the advantage of being easy to understand and the distinction between the two definitions is extremely technical. \\
\noindent Before introducing the inequality we first need to introduce some notation. Fix a compact subset $U$ of $M$ which contains all admissible curves for the problem. For example, the geodesic ball of radius $2L$ centred at the start-point $x_1$ works as such a set. For a curve $\gamma \in \mathcal{G}$ with the $L^\infty$ norm of the curvature given by $\normm{\kappa_\gamma}_{L^\infty} =: K$, we introduce the function
\[
c(K) = \inf_{\hat{\gamma} \in X} \{ \mathcal{L} \left[ \hat{\gamma} \right] \}
\]
where the infimum is taken over the set $X$ of all closed $W^{2,\infty}$ curves $\gamma \subset U$ (closed in the sense that the start-point coincides with the end-point, although the tangents at the start and end need not be the same) satisfying the curvature bound $\normm{\kappa_\gamma}_{L^\infty} \leq K$. Roughly speaking, $c(K)$ is ``the length of the shortest possible loop we can make under the restriction $\normm{\kappa_\gamma}_{L^\infty} \leq K$.'' Note that $K$ is itself a function of $\gamma$ (and so $c$ is too).
For example, in Euclidean space we can take $c(K) = 2 \pi / K$ by Fenchel's theorem. \\ 
For a curve $\gamma \in W^{2,\infty}(0,L)$ with $\norm{\kappa_\gamma}_{L^\infty(0,L)} = K$, we define the \textit{segment of $\gamma$ centred at $s$} by
\[
\text{seg}_{\gamma}(s) = \gamma((s - c(K)/4, s + c(K)/4 )).
\]
That is, $\text{seg}_{\gamma}(s)$ is the image of the restriction of $\gamma$ to the interval $(s - c(K)/4, s + c(K)/4 )$. \\
The \textit{boundary of the segment $\text{seg}_{\gamma}(s)$} is given by the union of the two points
\[
\gamma(s - c(K)/4) \cup \gamma(s + c(K)/4),
\]
where if one of the arguments is outside of the domain $[0,L]$ then the corresponding term is replaced by the empty set. This definition coincides with the conventional definition of the endpoints of a curve, except when the centre of the segment lies close to the endpoints. \\

\noindent One of the key ingredients in this paper is the approximation of the $\mathcal{K}_\infty$ functional by the functionals
\begin{equation*}
\mathcal{K}_p \left[ \gamma \right] = \left( \frac{1}{L} \int_0^L \vert \nabla_T T \vert^p \, \mathrm{d}s \right)^{\frac{1}{p}}
\end{equation*}
where $2 \leq p < \infty$. Although the factor of $1/L$ disappears in the limit, and thus may seem superfluous, it plays a useful role in manipulating some inequalities later on and it makes the $\mathcal{K}_p$ functionals closer to ``averages'' in the traditional sense of the word. \\
Furthermore, given an admissible curve $\tgamma \in \mathcal{G}$ and a real number $\sigma > 0$, we consider the ``penalised $\mathcal{K}_p$ functional''
\begin{align*}
\mathcal{J}_p^\sigma \left[ \gamma ; \tgamma \right] &= \mathcal{K}_p \left[ \gamma \right] + \frac{\sigma}{2L} \int_0^L d \left( \gamma (s), \text{seg}_{\tgamma} (s) \right)^2 \, \mathrm{d}s \\
&= \mathcal{K}_p \left[ \gamma \right] + \mathcal{P}_\sigma \left[ \gamma ; \tgamma \right].
\end{align*}
Using the direct method it can be shown that minimisers of $\mathcal{J}_p^\sigma \left[ \, \bigcdot \, ; \tgamma \right]$ subject to the constraints \eqref{eq:bcs}--\eqref{eq:length} exist. Furthermore, we have
\begin{lem}\label{lem:uniquegeod}
Suppose $\tgamma$ is an $\infty$-elastica and let $(\gamma_p)$ be a sequence of admissible curves minimising $\mathcal{J}_p^\sigma \left[ \,\bigcdot\, ; \tgamma\right]$. \\
1) If $\sigma$ is sufficiently large then the sequence $(\gamma_p)$ converges weakly in $W^{2,q}(0,L)$ to $\tgamma$ as $p \rightarrow \infty$ for every $1 < q < \infty$. \\
2) If $\sigma$ is large enough that part 1) applies, then for $p$ large enough and every $s \in (0,L)$ the minimal geodesic between $\gamma_p(s)$ and $\text{seg}_\tgamma(s)$ is unique, and the end point of this geodesic does not lie on the boundary of the segment $\text{seg}_\tgamma(s)$.
\end{lem}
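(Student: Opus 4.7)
The plan for part 1 is a compactness-plus-rigidity argument. By minimality, $\mathcal{J}_p^\sigma[\gamma_p;\tgamma] \le \mathcal{J}_p^\sigma[\tgamma;\tgamma]$, and the penalisation on the right vanishes since $\tgamma(s)$ sits at the centre of $\text{seg}_\tgamma(s)$. Jensen's inequality then gives $\mathcal{K}_p[\gamma_p] + \mathcal{P}_\sigma[\gamma_p;\tgamma] \le \mathcal{K}_p[\tgamma] \le \mathcal{K}_\infty[\tgamma] = K$ uniformly in $p$. Since the $\gamma_p$ are arclength, Hölder's inequality then yields uniform bounds on $\gamma_p$ in $W^{2,q}(0,L)$ for every finite $q > 1$. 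Extracting a subsequence with $\gamma_p \rightharpoonup \gamma^*$ weakly in $W^{2,q}$ and applying Rellich compactness gives $\gamma_p \to \gamma^*$ in $C^1$, with $\gamma^*$ admissible because the boundary conditions, arclength and length constraint all pass to the limit.

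To identify $\gamma^* = \tgamma$ I combine two inequalities. Lower semicontinuity of $\mathcal{K}_\infty$ under weak $W^{2,q}$ convergence (via $\mathcal{K}_\infty[\gamma^*] = \sup_{q'} \mathcal{K}_{q'}[\gamma^*]$, the Jensen tower $\mathcal{K}_{q'}[\gamma_p] \le \mathcal{K}_p[\gamma_p]$ for $p \ge q'$, and weak lsc of each $\mathcal{K}_{q'}$) together with continuity of the penalisation under the $C^1$ convergence give
\[
\mathcal{K}_\infty[\gamma^*] + \frac{\sigma}{2L} \int_0^L d(\gamma^*(s),\text{seg}_\tgamma(s))^2 \, \mathrm{d}s \le \mathcal{K}_\infty[\tgamma].
\]
Applying Definition \ref{defn:inftyelastica} to the comparison curve $\gamma^*$ yields
\[
\mathcal{K}_\infty[\tgamma] \le \mathcal{K}_\infty[\gamma^*] + \frac{M}{2L} \int_0^L d(\gamma^*(s),\tgamma([0,L]))^2 \, \mathrm{d}s,
\]
and the inclusion $\text{seg}_\tgamma(s) \subset \tgamma([0,L])$ makes the integrand on the right smaller than the penalised one. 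Subtracting and choosing $\sigma > M$ forces $d(\gamma^*(s),\text{seg}_\tgamma(s)) = 0$ a.e., hence $\gamma^*(s) \in \overline{\text{seg}_\tgamma(s)} \subset \tgamma([0,L])$ for every $s$ by continuity of $\gamma^*$. Writing $\gamma^*(s) = \tgamma(\phi(s))$ locally on intervals where $\tgamma$ is injective, the arclength parametrisation of both curves forces $\vert \phi'(s) \vert = 1$ a.e., and continuity together with $\phi(0) = 0$, $\phi(L) = L$ gives $\phi(s) = s$. Hence $\gamma^* = \tgamma$, and uniqueness of the limit promotes the subsequence to the full sequence.

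For part 2 I exploit the uniform $C^1$ convergence $\gamma_p \to \tgamma$ from part 1. Since $\tgamma$ is $C^1$ with unit tangent, a standard tubular neighbourhood argument yields a radius $\delta > 0$, uniform in $s$ by compactness of $[0,L]$, such that every point within distance $\delta$ of $\tgamma(s)$ in $M$ has a unique nearest point on the local branch of $\tgamma$ through $\tgamma(s)$, which is close to $\tgamma(s)$ in arclength and joined to the point by a unique radial geodesic. For $p$ large enough, $\gamma_p(s)$ lies in this tube, so its foot is strictly inside the open interval defining $\text{seg}_\tgamma(s)$, i.e.\ not on the boundary. To exclude competing nearest points coming from distant branches of $\tgamma$, I invoke the definition of $c(K)$: such a competitor would combine with the arc of $\tgamma$ connecting the two feet to form a closed $W^{2,\infty}$ curve of curvature $\le K$ and length strictly less than $c(K)$, contradicting the definition.

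The main technical obstacle is the no-loop step in part 2: making this rigorous requires smoothing the corners of the concatenation without exceeding the curvature bound $K$ or inflating the length beyond $c(K)$, which I would do by a careful local perturbation. A secondary subtlety is that $\text{seg}_\tgamma(s)$ is the image of an open interval and so is not closed, but this is harmless since its closure still lies inside $\tgamma([0,L])$, which is what is needed for the rigidity step.
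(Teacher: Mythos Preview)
Your compactness argument and the use of the $\infty$-elastica inequality with $\sigma > M$ to force the penalisation integral to vanish match the paper exactly. The substantive difference is in how you pass from $d(\gamma^*(s),\text{seg}_\tgamma(s))=0$ to $\gamma^*=\tgamma$. Your reparametrisation idea $\gamma^*=\tgamma\circ\phi$ with $|\phi'|=1$ is valid and arguably cleaner than the paper's route, but as written it is incomplete: you need $\phi$ to be \emph{globally} well-defined and continuous on $[0,L]$, not just ``locally on intervals where $\tgamma$ is injective''. This is precisely where $c(K)$ enters, and you never invoke it in part 1. Each segment $\text{seg}_\tgamma(s)$ is the image of an interval of length $c(K)/2<c(K)$, so by definition of $c(K)$ the curve $\tgamma$ is injective there; this gives a unique $\phi(s)\in[s-c(K)/4,\,s+c(K)/4]$, and from that one can show $\phi$ is globally continuous and $C^1$, after which $|\phi'|\equiv1$ together with $\phi(0)=0$, $\phi(L)=L$ finishes. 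The paper instead runs a longer contradiction argument: assuming $\gamma_\infty\neq\tgamma$, it locates parameter values at which $\gamma_\infty$ is forced to land on arcs of $\tgamma$ far from the centre of the segment, and from a limiting sequence manufactures a loop in $\tgamma$ of length less than $c(K)$. Your approach, once the global definition of $\phi$ is justified via $c(K)$, is shorter.

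For part 2 the overall shape (uniform convergence plus a tubular-neighbourhood picture) agrees with the paper. Where you propose to exclude a second nearest point by concatenating the two minimising geodesics with an arc of $\tgamma$ into a short closed curve, the paper avoids your acknowledged corner-smoothing difficulty entirely: it observes that two nearest points would both lie on the boundary of a geodesic ball of radius $d_0\le\varepsilon$ about $\gamma_p(s)$ with $\tgamma$ tangent to that sphere at each, and a short arc of $\tgamma$ touching a very small sphere tangentially twice forces curvature exceeding $K$. This curvature-blowup argument sidesteps the need to produce an honest $W^{2,\infty}$ loop and is the cleaner way to close part 2.
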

\begin{proof}
For $q \geq p$, observe the chain of inequalities
\begin{equation}\label{eq:ineqs}
\mathcal{K}_p\left[ \gamma_p \right] \leq \mathcal{J}_p^\sigma \left[ \gamma_p ; \tgamma \right] \leq \mathcal{J}_p^\sigma \left[ \gamma_q ; \tgamma \right] \leq \mathcal{J}_q^\sigma \left[ \gamma_q ; \tgamma \right] \leq \mathcal{J}_q^\sigma \left[ \tgamma ; \tgamma \right] = \mathcal{K}_q \left[ \tgamma \right] \leq \mathcal{K}_\infty \left[ \tgamma \right],
\end{equation}
due to H\"{o}lder's inequality and the choice of the curves $\gamma_p$. It follows that the sequence $(\gamma_p)_{q \leq p < \infty}$ is bounded in $W^{2,q}(0,L)$ for any $2 \leq q < \infty$, so there exists a subsequence $(p_i)$ such that $\gamma_{p_i}$ converges weakly in $W^{2,q}(0,L)$ (and therefore strongly in $C^0$) for every $q < \infty$ to a limit
\[
\gamma_\infty \in \bigcap_{q < \infty} W^{2,q}(0,L).
\]

\noindent The well-known property of the $L^\infty$ norm as the limit of $L^p$ norms then implies that
\begin{align}
\mathcal{J}_\infty^\sigma \left[ \gamma_\infty ; \tgamma \right] &= \lim_{q \rightarrow \infty} \mathcal{J}_q^\sigma \left[ \gamma_\infty ; \tgamma \right] \nonumber \\
&= \lim_{q \rightarrow \infty} \mathcal{K}_q \left[ \gamma_\infty \right] + \mathcal{P}_\sigma \left[ \gamma_\infty ; \tgamma \right]. \nonumber \\
\intertext{By the strong $C^0$ convergence we have that both the $\liminf$ and the limit as $i \rightarrow \infty$ of $\mathcal{P}_\sigma \left[ \gamma_{p_i} ; \tgamma \right]$ coincide and equal $\mathcal{P}_\sigma \left[ \gamma_\infty ; \tgamma \right]$. Therefore, using the lower semicontinuity of the $L^q$ norm with respect to weak convergence, we find that}
\mathcal{J}_\infty^\sigma \left[ \gamma_\infty ; \tgamma \right] &\leq \lim_{q \rightarrow \infty} \liminf_{i \rightarrow \infty} \mathcal{K}_q \left[ \gamma_{p_i} \right] + \lim_{q \rightarrow \infty} \liminf_{i \rightarrow \infty} \mathcal{P}_\sigma \left[ \gamma_\infty ; \tgamma \right] \nonumber \\
&= \lim_{q \rightarrow \infty} \liminf_{i \rightarrow \infty} \mathcal{J}_q^\sigma \left[ \gamma_{p_i} ; \tgamma \right] \label{eq:ineqs2}\\
&\leq \mathcal{K}_\infty \left[ \tgamma \right], \nonumber
\end{align} %
where the final inequality comes from \eqref{eq:ineqs}. By assumption, however, there exists $M \in \mathbb{R}$ such that 
\[
\mathcal{K}_\infty \left[ \tgamma \right] \leq \mathcal{K}_\infty\left[\gamma_\infty\right] + \frac{M}{2L}\int_0^L d(\gamma_\infty(s), \text{seg}_\tgamma(s))^2 \, \mathrm{d}s,
\]
and so 
\[
\mathcal{J}_\infty^\sigma \left[ \gamma_\infty ; \tgamma \right] \leq \mathcal{J}_\infty^M \left[ \gamma_\infty ; \tgamma \right].
\]
Choosing $\sigma > M$ then implies that 
\[
\int_0^L d(\gamma_\infty(s), \text{seg}_\tgamma(s))^2 \, \mathrm{d}s = 0.
\]
We claim that this equality can only be satisfied when $\gamma_\infty = \tgamma$. Indeed, if there is any point on $\gamma_\infty$ which does not lie on $\tgamma$ then by continuity there must be some $\varepsilon > 0$ and some interval in which $d(\gamma_\infty(s), \text{seg}_\tgamma(s))^2 > \varepsilon$, which contradicts the fact that the above integral evaluates to zero. It follows that every point on $\gamma_\infty$ is also a point on $\tgamma$.
\noindent Assume for a contradiction that the two curves are not the same.
\noindent Let $s_0 \in [0,L]$ be the unique number such that $\tgamma(s) = \gamma_\infty(s)$ for all $s \in [0,s_0]$ and such that for all $\varepsilon > 0$ the interval $(s_0, s_0 + \varepsilon)$ contains an element $s$ with $\tgamma(s) \neq \gamma_\infty(s)$. \\
Pick some $\varepsilon > 0$ and define $s_1 = s_0 + \varepsilon$. \\
\noindent When $\varepsilon$ is sufficiently small, there must be infinitely many values $s \in (s_0, s_1)$ such that
\[
\gamma_\infty(s) \not\in \tgamma([ s_0 - c/4, s_1 ]). 
\]
Indeed, for all $s \in (s_0,s_1)$ we have that $\gamma_\infty(s) \not\in \tgamma([ s_0 - c/4, s_0 ]) = \gamma_\infty([ s_0 - c/4, s_0 ])$ because if this inclusion did hold, $\gamma_\infty$ would contain a loop of length less than $c/2$; thus, since the image of $\gamma_\infty$ is a subset of the image of $\tgamma$, $\tgamma$ would also contain such a loop in contradiction of the definition of $c$. \\
Supposing there were only finitely many $s$ such that $\gamma_\infty(s) \not\in \tgamma([ s_0, s_1 ])$, we could enumerate these values as $t_1, \cdots, t_k$. For each $i = 1, \cdots, k$ we could then take a sequence $(u_n)$ approaching $t_i$ such that $\gamma_\infty (u_n) \in \tgamma([ s_0, s_1 ])$, and find that
\[
\gamma_\infty (t_i) = \lim_n \gamma_\infty (u_n) \in \tgamma([ s_0, s_1 ]),
\]
a contradiction.\\
On the other hand, if there were no such $s$-values, it would follow that as sets the inclusion $\gamma_\infty((s_0,s_1)) \subset \tgamma([s_0,s_1])$ holds and hence $\gamma_\infty([s_0,s_1]) \subset \tgamma([s_0,s_1])$ by continuity. For small $\varepsilon$, the curvature bound on $\tgamma$ ensures that $\tgamma([s_0,s_1])$ has no self-intersections, and each curve is parametrised by arclength so we actually have the equality of sets $\gamma_\infty([s_0,s_1]) = \tgamma([s_0,s_1])$. It follows that $\tgamma(s) = \gamma_\infty(s)$ for all $s \in [s_0,s_1]$, contradicting the definition of $s_0$.
\\

\noindent Thus, as we assumed that $\gamma_\infty(s) \in \text{seg}_\tgamma(s) = \tgamma((s-c/4,s+c/4))$ for all $s$, there must be infinitely many values $s \in (s_0,s_1)$ with $\gamma_\infty(s)$ contained in $\tgamma((s_1, s + c/4))$. \\
Now take a sequence $(s_n)$ converging to $s_0$; we find that there is a sequence $(t_n) \subset (s_1, s_1 + c/4]$ such that $\gamma_\infty(s_n) = \tgamma(t_n)$. Taking a subsequence we find that $t_n \rightarrow t_0 \in [s_1, s_1 + c/4]$, with
\[
\tgamma(t_0) = \lim_n \tgamma(t_n) = \lim_n \gamma_\infty(s_n) = \gamma_\infty(s_0) = \tgamma(s_0),
\]
but this implies that $\tgamma$ forms a loop of length less than $c$, which contradicts the definition of $c$.

\noindent For any $1 < q < \infty$, the above argument shows that every subsequence of $(\gamma_p)$ has a further subsequence which converges weakly in $W^{2,q}(0,L)$ to $\tgamma$. Thus the original sequence $(\gamma_p)$ also converges weakly in $W^{2,q}(0,L)$ to $\tgamma$. This proves part 1). \\

\noindent To show part 2), we first note that since $\gamma_p \rightharpoonup \tgamma$ in $W^{2,q}$ we also have the uniform convergence $\gamma_p \rightarrow \tgamma$. \\
By construction, for all $s \in [0,L]$ the segment $\text{seg}_\tgamma(s)$ has no self-intersections. Therefore for each $s \in [0,L]$ there exists $\varepsilon > 0$ such that the open ball $B_{3\varepsilon} (\tgamma(s))$ centred at $\tgamma(s)$ contains only one component of $\text{seg}_\tgamma(s)$.
By the compactness of $[0,L]$ there must then exist an $\varepsilon > 0$ such that for every $s \in [0,L]$, the intersection of the ball $B_{3\varepsilon} (\tgamma(s))$ and the segment $\text{seg}_\tgamma(s)$ consists of only one component.
At the same time, assume that $\varepsilon$ is small enough that the tails of $\text{seg}_\tgamma(s)$ given by
\[
\tgamma \bigr|_{(s - c(\tilde{K})/4, s - c(\tilde{K})/5) \cap (0,L)} \quad \text{ and } \quad \tgamma \bigr|_{(s + c(\tilde{K})/5, s + c(\tilde{K})/4) \cap (0,L)}
\]
lie outside of $B_{3\varepsilon} (\tgamma(s))$ (if $s$ is near either end of $(0,L)$ then one tail may be short or even non-existent). Also assume that $3\varepsilon < r$ where $r$ is the infimum of the injectivity radius over all points $x$ contained in the geodesic ball $B_L(x_0)$.
Now, by the uniform convergence we can take $p$ so large that $d(\gamma_p(s), \tgamma(s)) \leq \varepsilon$ for all $s \in (0,L)$. Then the infimum distance between $\gamma_p(s)$ and $\text{seg}_\tgamma(s)$ is equal to the infimum distance $d_0 \leq \varepsilon$ between the fixed point $\gamma_p(s)$ and the compact set $\tgamma (\lbrack s - c(\tilde{K})/5), s + c(\tilde{K})/5) \rbrack)$, so it must be attained somewhere. Assuming for a contradiction that the minimal distance is attained at more than one point, we obtain two points on the geodesic circle $\partial B = \partial B_{d_0}(\gamma_p(s))$ of radius $d_0 \leq \varepsilon$ at which $\tgamma$ is tangent to $B$, connected by a segment of $\tgamma$, contained entirely inside $B_{3 \varepsilon}(\gamma_p(s))$, of length less than or equal to $2c(\tilde{K})/5$. Picking $\varepsilon$ arbitrarily small, the curvature of $\tgamma$ is then forced to be arbitrarily large, but this is not possible due to the curvature bound $\kappa \leq \tilde{K}$ on $\tgamma$. Hence the minimiser must be unique.
\end{proof}

\noindent When $\tgamma$ is an $\infty$-elastica, Lemma \ref{lem:uniquegeod} ensures that for sufficiently large $p$ it is possible to compute the Euler-Lagrange equations for the minimiser $\gamma_p$ of the functional $\mathcal{J}_p^\sigma \left[ \,\bigcdot\, ; \tgamma \right]$. Indeed, we find that
\begin{prop}\label{prop:EL}
Let $\sigma > 0$ and an $\infty$-elastica $\tgamma \in \mathcal{G}$ be given. Assume $\sigma$ is large enough that part 1) of Lemma \ref{lem:uniquegeod} applies. For every $p$ large enough that part 2) of Lemma \ref{lem:uniquegeod} applies, suppose that $\gamma_p \in W^{2,p}(0,L)$ minimises $\mathcal{J}_{p}^\sigma \left[ \, \bigcdot \, ; \tgamma \right]$ subject to the constraints \eqref{eq:bcs}--\eqref{eq:length}. Let $K_p = \mathcal{K}_p \left[ \gamma_p \right]$ and let $\hvarphi_p = \vert \nabla_{T_p} T_p \vert^{p-2} \nabla_{T_p} T_p$. Then $\hvarphi_p \in W^{2,1}(0,L)$ and there are Lagrange multipliers $\lambda_p \in \mathbb{R}$ such that the Euler-Lagrange equation
\begin{align}\label{eq:ELunnorm}
\begin{split}
0 = &\, \frac{1}{L}K_{p}^{1-p}R(\hvarphi_{p}, T_{p}) T_{p}  + \frac{\sigma}{L} d_{p} \nu_{p} - \lambda_{p} \nabla_{T_{p}} T_{p} + \frac{1}{L}\frac{2p-1}{p}K_{p}^{1-p}\vert \hvarphi_{p} \vert \kappa_{p} \nabla_{T_{p}} T_{p} \\
&+ \frac{1}{L}\frac{2p-1}{p-1}K_{p}^{1-p} \kappa_{p} \vert \hvarphi_{p} \vert' T_{p} - \frac{\sigma}{2L}(d_{p}^2)'T_{p} - \frac{\sigma}{2L}d_{p}^2 \nabla_{T_{p}} T_{p} + \frac{1}{L}K_{p}^{1-p} \nabla_{T_{p}}^2 \hvarphi_{p}
\end{split}
\end{align}
holds a.e. in $(0,L)$, where $d_{p}^2(s) = d ( \gamma_{p} (s), \text{seg}_{\tgamma} (s) )^2$ and $\nu_{p}(s)$ is the unit tangent vector at $\gamma_{p}(s)$ associated with the minimal geodesic from $\text{seg}_{\tgamma} (s)$ to $\gamma_{p}(s)$.
\end{prop}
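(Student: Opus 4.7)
The approach is to treat $\gamma_p$ as a constrained minimiser and apply the Lagrange multiplier principle to the length constraint \eqref{eq:length}: there exists $\lambda_p \in \mathbb{R}$ such that $\gamma_p$ is a critical point of $\mathcal{J}_p^\sigma - \lambda_p \mathcal{L}$ among all $W^{2,p}$ curves satisfying only the boundary conditions \eqref{eq:bcs}. I would take admissible variations to be smooth one-parameter families $\Gamma \colon (-\delta, \delta) \times [0,L] \to M$ with $\Gamma(0, \cdot) = \gamma_p$ and variation vector field $\psi = \partial_\varepsilon \Gamma|_{\varepsilon=0}$ satisfying $\psi(0) = \psi(L) = 0$ (to preserve the endpoints) and $\nabla_{T_p} \psi(0) = \nabla_{T_p} \psi(L) = 0$ (equivalent, via $[\partial_\varepsilon,\partial_s]=0$, to preserving the prescribed endpoint tangents). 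To deal with the fact that such variations need not preserve arclength, I would rewrite $\mathcal{K}_p$ in reparametrisation-invariant form $\bigl(\tfrac{1}{L}\int_0^L \kappa^p |\gamma'|\,ds\bigr)^{1/p}$, with $T = \gamma'/|\gamma'|$, $\nabla_T = (1/|\gamma'|)\nabla_{\gamma'}$, and similarly extend $\mathcal{P}_\sigma$ to the invariant form $\tfrac{\sigma}{2L}\int d(\gamma(s), \text{seg}_{\tgamma}(s))^2 |\gamma'(s)|\,ds$; at $\gamma = \gamma_p$ the normalisation $|\gamma_p'| \equiv 1$ restores the original functionals.

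Next I would compute the first variation of each ingredient. For $\mathcal{K}_p$ the key identity is the commutator formula
\[
\nabla_V \nabla_T T = \nabla_T^2 V + R(V, T) T, \qquad [T, V] = 0,
\]
which, after two integrations by parts (with boundary terms vanishing by the assumptions on $\psi$) and the chain rule $\delta \mathcal{K}_p = \tfrac{1}{Lp}K_p^{1-p}\, \delta\!\int |\nabla_T T|^p |\gamma'|\,ds$, produces both the curvature-tensor term $R(\hvarphi_p, T_p) T_p$ and the top-order contribution $\nabla_{T_p}^2 \hvarphi_p$; the remaining terms $|\hvarphi_p| \kappa_p \nabla_{T_p} T_p$ and $\kappa_p |\hvarphi_p|' T_p$ in \eqref{eq:ELunnorm} arise from carefully tracking the $|\gamma'|$ factor that appears simultaneously in the normalisation $T = \gamma'/|\gamma'|$, in the covariant derivative $\nabla_T$, and in the volume element. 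For the length functional, $\delta \mathcal{L}[\psi] = -\int \langle \nabla_{T_p} T_p, \psi\rangle\,ds$ contributes the $-\lambda_p \nabla_{T_p} T_p$ term (up to the sign convention for $\lambda_p$). For the penalisation, the decisive input is Lemma \ref{lem:uniquegeod}(2): for $p$ large, the minimising geodesic from $\gamma_p(s)$ to $\text{seg}_{\tgamma}(s)$ is unique and its foot lies in the interior of the segment, so $x \mapsto d(x,\text{seg}_{\tgamma}(s))^2$ is $C^1$ in a neighbourhood of $\gamma_p(s)$ with gradient $2 d_p(s) \nu_p(s)$. Varying the invariant form of $\mathcal{P}_\sigma$ gives
\[
\delta \mathcal{P}_\sigma[\psi] = \frac{\sigma}{L}\int d_p \langle \nu_p, \psi\rangle\,ds + \frac{\sigma}{2L}\int d_p^2 \langle T_p, \nabla_{T_p}\psi\rangle\,ds,
\]
and integrating by parts the second integral produces precisely the terms $-\tfrac{\sigma}{2L}(d_p^2)' T_p$ and $-\tfrac{\sigma}{2L} d_p^2 \nabla_{T_p} T_p$ in \eqref{eq:ELunnorm}.

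Summing all contributions, setting the total first variation to zero for every admissible $\psi$, and invoking the fundamental lemma of the calculus of variations gives \eqref{eq:ELunnorm} pointwise a.e.\ in $(0,L)$. The assertion $\hvarphi_p \in W^{2,1}(0,L)$ then follows by rearranging \eqref{eq:ELunnorm} to isolate $\nabla_{T_p}^2 \hvarphi_p$ and observing that every other term on the right-hand side is at least in $L^1$. The \textbf{main obstacle} is the first variation of $\mathcal{K}_p$: one must track a proliferation of cross-terms produced by the interplay of the curvature normalisation, the arclength parametrisation, and the volume element, and combine them correctly in order to recover the precise coefficients $\tfrac{2p-1}{p}$ and $\tfrac{2p-1}{p-1}$ appearing in \eqref{eq:ELunnorm}. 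By contrast, the penalisation, initially worrying because it involves the squared distance to an a priori only Lipschitz set, becomes entirely routine thanks to the uniqueness-and-interior-foot property supplied by Lemma \ref{lem:uniquegeod}(2).
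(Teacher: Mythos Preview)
Your overall strategy matches the paper's: compute the first variation of $\mathcal{K}_p$, $\mathcal{P}_\sigma$ and $\mathcal{L}$ separately via a variation that need not preserve arclength, invoke the Lagrange multiplier rule, and use Lemma \ref{lem:uniquegeod}(2) to justify differentiating the distance term. The ingredients and the identification of the main difficulty (the bookkeeping in $\delta\mathcal{K}_p$) are all correct.

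There is, however, a genuine logical gap in the order of your regularity argument. You write that two integrations by parts ``produce \ldots\ the top-order contribution $\nabla_{T_p}^2 \hvarphi_p$'', then apply the fundamental lemma to obtain \eqref{eq:ELunnorm} pointwise, and \emph{only afterwards} conclude $\hvarphi_p \in W^{2,1}$ by rearranging the strong equation. This is circular: a priori $\gamma_p \in W^{2,p}$ gives only $\hvarphi_p \in L^{p'}$, so you are not entitled to move two covariant derivatives from $\psi$ onto $\hvarphi_p$, and the expression $\nabla_{T_p}^2 \hvarphi_p$ has no meaning yet. The paper avoids this by keeping the derivatives on the test field and first recording the Euler--Lagrange equation in weak form,
\[
\int_0^L \langle E_1, W\rangle + \langle E_2, \nabla_{T_p} W\rangle + \langle E_3, \nabla_{T_p}^2 W\rangle \, \mathrm{d}s = 0,
\]
with $E_3 = \tfrac{1}{L}K_p^{1-p}\hvarphi_p$. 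Written in local coordinates this is a linear relation asserting that the second distributional derivative of $\hvarphi_p$ equals an explicit $L^1$ function, whence $\hvarphi_p \in W^{2,1}$ by standard regularity; \emph{only then} is the integration by parts performed to reach the strong form \eqref{eq:ELunnorm}. Your argument is easily repaired by this reordering, but as written the step ``integrate by parts, then deduce regularity'' does not stand.

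A smaller point: in your reparametrisation-invariant form of $\mathcal{P}_\sigma$ you should be explicit about what $\text{seg}_{\tgamma}(s)$ means for the varied curve, since the segment is indexed by the \emph{arclength} of the varied curve, which itself moves under the variation. The paper handles this by arguing that for small variation the portion of the segment near $\gamma_p(s)$ is unchanged, so effectively one is differentiating the squared distance to a fixed set; your write-up should make the same observation rather than silently freezing $s$.
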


\noindent To prove this proposition we will need a standard Lemma concerning the derivative of the square of the distance function to a closed set:
\begin{lem}\label{lem:firstvardist}
Let $\Omega \subset M$ be a closed set, and $x_0 \in M$ a point such that there is a unique minimal geodesic connecting $x$ and $\Omega$ (i.e. a unique geodesic from $x$ to $\Omega$ with length equal to $d(x, \Omega)$). Then the function $f(x) = d(x,\Omega)^2$ is differentiable at points $x_0$ where $d(x_0,\Omega)$ is sufficiently small, with the directional derivative in the direction of $v \in T_{x_0}M$ given by $-2d(x,\Omega)\langle v, \nu \rangle$ where $\nu$ is the unit tangent vector in $T_{x_0} M$ to the distance-minimising geodesic from $x_0$ to $\Omega$.
\end{lem}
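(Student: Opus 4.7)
My plan is to prove the directional-derivative formula in every direction $v \in T_{x_0}M$ along a smooth curve $c$ with $c(0)=x_0$, $c'(0)=v$, and then upgrade this to differentiability. The key structural fact is that $d(\,\cdot\,,\Omega)$ is $1$-Lipschitz on $M$, so $f$ is locally Lipschitz near $x_0$, which means that once the directional derivative is linear in $v$ differentiability follows. I read the hypothesis ``$d(x_0,\Omega)$ sufficiently small'' as saying $d_0 := d(x_0,\Omega)$ lies strictly below the injectivity radius at the (unique) closest point $y_0 \in \Omega$; this guarantees that the squared-distance function $g_y(x) := d(x,y)^2$ is jointly smooth in $(x,y)$ on a neighbourhood of $(x_0,y_0)$, and in particular that $\nabla_x g_y(x_0) = -2 d(x_0,y) \nu_y$, where $\nu_y$ is the unit tangent at $x_0$ of the minimal geodesic from $x_0$ to $y$.

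For the upper bound I use the single test point $y_0$: since $y_0 \in \Omega$ we have $f(c(t)) \leq g_{y_0}(c(t))$ for all $t$, with equality at $t=0$. Because $g_{y_0}$ is smooth near $x_0$ with derivative $-2d_0 \langle v,\nu\rangle$ along $c$ at $t=0$, this comparison yields
\[
\limsup_{t \to 0^+} \frac{f(c(t)) - f(x_0)}{t} \leq -2 d_0 \langle v,\nu\rangle, \qquad \liminf_{t \to 0^-} \frac{f(c(t)) - f(x_0)}{t} \geq -2 d_0 \langle v,\nu\rangle.
\]

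For the matching bounds I would, for each small $t$, select a realiser $y_t \in \Omega$ of $d(c(t),\Omega)$; closedness of $\Omega$ together with the fact that the minimum is attained in a fixed compact neighbourhood of $y_0$ guarantees existence. I would next show $y_t \to y_0$ as $t \to 0$: any cluster point $y^{*}$ satisfies $d(x_0,y^{*}) = d_0$, and so must equal $y_0$ by the uniqueness hypothesis. Since $d(x_0,y_0) \leq d(x_0,y_t)$, I get the reverse comparison
\[
f(c(t)) - f(x_0) = g_{y_t}(c(t)) - d_0^2 \geq g_{y_t}(c(t)) - g_{y_t}(x_0).
\]
Invoking the joint smoothness of $g_y(x)$ on a neighbourhood of $(x_0,y_0)$, a Taylor expansion with remainder uniform in $y$ over a compact neighbourhood of $y_0$ gives
\[
g_{y_t}(c(t)) - g_{y_t}(x_0) = t \langle \nabla_x g_{y_t}(x_0), v\rangle + o(t),
\]
and the continuity $\nabla_x g_{y_t}(x_0) \to -2 d_0 \nu$ as $y_t \to y_0$ pins the remaining one-sided derivatives to $-2 d_0 \langle v,\nu\rangle$ as well.

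Combining the four one-sided estimates establishes that the directional derivative of $f$ at $x_0$ in direction $v$ equals $-2 d_0 \langle v,\nu\rangle$. Since this expression is a bounded linear functional on $T_{x_0}M$ and $f$ is locally Lipschitz, the directional derivatives assemble into a genuine differential, completing the proof. The step I expect to require the most care is the uniform-in-$y$ Taylor remainder in the lower-bound argument, together with the accompanying continuous dependence of $\nu_{y_t}$ on $y_t$; both, however, are routine once one works strictly inside the injectivity radius at $y_0$, which is precisely what the smallness hypothesis on $d(x_0,\Omega)$ buys.
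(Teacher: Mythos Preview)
The paper does not actually prove this lemma: it states that ``the result is well-known nevertheless and therefore the proof of this Lemma is omitted.'' Your proposal therefore supplies a proof where the paper gives none, and the argument you outline is the standard marginal--function (envelope) approach, which is correct.

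A couple of remarks on points you yourself flagged. First, the ``uniform-in-$y$ Taylor remainder'' step is exactly right, and in fact it already gives uniformity in $v$ over the unit sphere as well (since the Hessian of $g_y$ is uniformly bounded on a compact neighbourhood of $(x_0,y_0)$); so your upper- and lower-bound estimates upgrade directly to Fr\'echet differentiability without invoking the separate implication ``Lipschitz $+$ linear G\^ateaux $\Rightarrow$ Fr\'echet.'' That said, this implication is also valid in finite dimensions: compactness of the unit sphere lets you pass from pointwise convergence of the difference quotients along finitely many directions to uniform convergence over all directions, using the Lipschitz bound to interpolate. Either route closes the argument. Second, the convergence $y_t \to y_0$ uses that the hypothesis (uniqueness of the minimal geodesic from $x_0$ to $\Omega$) in particular forces uniqueness of the nearest point $y_0$; you use this correctly.
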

\noindent Although a statement of precisely the right form is difficult to find in the literature, the result is well-known nevertheless and therefore the proof of this Lemma is omitted.

\begin{proof}[Proof of Proposition \ref{prop:EL}]
Fix $p \in \mathbb{N}$ sufficiently large and let the minimiser of $\mathcal{J}_{p}^\sigma \left[ \, \bigcdot \, ; \tgamma \right]$ be denoted by $\gamma$ (for now, we drop the subscript $p$ from our notation for simplicity). \\
Before computing the first variations it will be useful to establish some notation. Let $t$ be a regular parameter ranging over the interval $[0,1]$ (in particular $t$ need not be the arclength parameter). The speed $v(t)$ of $\gamma$ at time $t$ is given by  $\left\lvert \gamma'(t) \right\rvert$. Take $V$ to be the velocity vector field of $\gamma$, i.e. $V = vT = \frac{\partial \gamma}{\partial t}$.
Consider a smooth variation $\gamma(t,w)$ of $\gamma$, such that $\gamma(t,0) =: \gamma_0(t) = \gamma(t)$, and assume it preserves the boundary conditions of the problem in the sense that for every fixed value of the variational parameter $w$, the curve given as a function of $t$ by $\gamma_w(t) := \gamma(t,w)$ satisfies \eqref{eq:bcs} (but not necessarily \eqref{eq:length}). Note that we are slightly abusing notation here as we use $\gamma$ to refer to both the original curve and its variation, although from context the meaning will be clear. \\
Explicitly, in local co-ordinates we may write $\gamma(t,w) = \gamma_0(t) + w\psi(t)$ for some smooth function $\psi$.
To denote the variational vector field along $\gamma$, write 
\[
W(t) = \frac{\partial \gamma}{\partial w}(t,0).
\]
\noindent Note that since $t$ and $w$ are independent co-ordinates their Lie bracket $[V, W]$ vanishes, i.e. $\nabla_V W = \nabla_W V$. \\
We first compute the first variation of $\mathcal{K}_p^p$. Throughout this calculation we follow the work of \cite{singer2008lectures}. The first variation is given by
\begin{align*}
\frac{\mathrm{d}}{\mathrm{d}w}\Bigr|_{w=0}  \mathcal{K}_p^p\left[ \gamma_w \right] &= \frac{\mathrm{d}}{\mathrm{d}w}\Bigr|_{w=0} \frac{1}{L} \int_0^L \kappa(s)^p \, \mathrm{d}s \\
&= \frac{\mathrm{d}}{\mathrm{d}w}\Bigr|_{w=0} \frac{1}{L} \int_0^1 \kappa(t)^p v(t) \, \mathrm{d}t \\
&= \frac{1}{L} \int_0^1 W(\kappa^p) v + \kappa^p W(v) \, \mathrm{d}t 
\end{align*}
The first term in the integrand becomes
\begin{align*}
W(\kappa^p)v &= W\left(\left\langle \nabla_T T, \nabla_T T \right\rangle^{p/2} \right)v \\
&= p \kappa^{p-2} \left\langle \nabla_W \nabla_T T, \nabla_T T \right\rangle v \\
&= p \kappa^{p-2} \left( \left\langle \nabla_T T, \nabla_T^2 W \right\rangle + \left\langle R(\nabla_T T,T)T , W \right\rangle - 2\kappa^2\left\langle T, \nabla_T W \right\rangle \right)v
\end{align*}
where the third line follows from calculations similar to those in \cite[Equation~9]{singer2008lectures}. Here, $R(\,\bigcdot\, , \,\bigcdot\, ) \,\bigcdot\,$ denotes the Riemann curvature tensor. \\
To compute the second term in the integrand, note that $W(v^2) = 2vW(v)$, and also
\begin{align*}
W(v^2) &= W \left( \left\langle V, V \right\rangle \right) \\
&= 2 \left\langle \nabla_W V, V \right\rangle \\
&= 2 \left\langle \nabla_V W, V \right\rangle \\
&= 2 v^2 \left\langle \nabla_T W, T \right\rangle,
\end{align*}
where the vanishing Lie bracket condition $\nabla_W V = \nabla_V W$ and some linearities of the covariant derivative have been used. Thus $W(v) = \left\langle \nabla_T W, T \right\rangle v$. \\
The first variation of $\mathcal{K}_p^p$ now becomes
\[
\frac{1}{L}\int_0^1 \left(p \kappa^{p-2} \left( \left\langle \nabla_T T, \nabla_T^2 W \right\rangle + \left\langle R(\nabla_T T,T)T , W \right\rangle - 2\kappa^2\left\langle T, \nabla_T W \right\rangle \right) + \kappa^p \left\langle \nabla_T W, T \right\rangle\right) v \, \mathrm{d}t,
\]
and reparametrising by arclength turns this into
\begin{equation*}
\frac{1}{L}\int_0^L p \kappa^{p-2} \left( \left\langle \nabla_T T, \nabla_T^2 W \right\rangle + \left\langle R(\nabla_T T,T)T , W \right\rangle \right) + (1-2p) \kappa^p \left\langle T, \nabla_T W \right\rangle \, \mathrm{d}s. \\
\end{equation*}
We now compute the first variation of the penalisation term $\mathcal{P}_\sigma\left[\,\bigcdot\, ; \tgamma\right]$. This is given by
\begin{align*}
\frac{\mathrm{d}}{\mathrm{d}w}\Bigr|_{w=0} \mathcal{P}_\sigma\left[\gamma_w ; \tgamma\right] &= \frac{\mathrm{d}}{\mathrm{d}w}\Bigr|_{w=0} \frac{\sigma}{2L} \int_0^L d(s)^2 \, \mathrm{d}s \\
&= \frac{\mathrm{d}}{\mathrm{d}w}\Bigr|_{w=0} \frac{\sigma}{2L} \int_0^1 d(t)^2 v(t) \, \mathrm{d}t \\
&= \frac{\sigma}{2L} \int_0^1 W(d^2) v + d^2 W(v) \, \mathrm{d}t,
\end{align*}
where the shorthand $d(t)$ has been used to denote the distance term for brevity.
It remains to obtain an expression for $W(d^2)$. The instantaneous direction of movement of the point $\gamma(t)$ is given by $\partial \gamma / \partial w \bigr|_{w=0} \gamma(t,w) = W(t)$. Now, by taking $w$ sufficiently small the difference between the arclength along $\gamma_w$ up to the point $\gamma_w(t)$ and the arclength along $\gamma$ up to the point $\gamma(t)$ can be made arbitrarily small, such that the endpoints of $\text{seg}_\tgamma (\tilde{s} \bigr|_{\tilde{s} = s(t,w) L / L_{\gamma_w}} )$ move so little as to leave unchanged the portion of $\text{seg}_\tgamma (\tilde{s} \bigr|_{\tilde{s} = s(t,w) L / L_{\gamma_w}} )$ contained within $B_{3\varepsilon} (\tgamma(s))$. In effect, for small $w$ and fixed $t$, the distance term is equal to the distance between $\gamma(t,w)$ and a fixed set of points. Using Lemma \ref{lem:firstvardist} we therefore obtain that $W(d^2) = -2d\langle \nu, W \rangle$, where $\nu$ is the tangent vector at $\gamma(t)$ associated with the distance-minimising curve starting on $\text{seg}_\tgamma (\tilde{s} \bigr|_{\tilde{s} = s(t) L / L_{\gamma}} )$ and ending at $\gamma(t)$. \\
The $W(v)$ term was already computed when considering the first variation of $\mathcal{K}_p^p$, and so we can substitute in the expressions for $W(d^2)$ and $W(v)$ to turn the first variation of $\mathcal{P}_\sigma\left[\,\bigcdot\, ; \tgamma\right]$ into
\[
\frac{\sigma}{2L} \int_0^1 \left(  2d \left\langle \nu, W \right\rangle + d^2 \left\langle \nabla_T W, T \right\rangle \right)v \, \mathrm{d}t.
\]
Again, the presence of $v$ makes it possible to reparametrise this by arclength, yielding
\begin{equation*}
\frac{\sigma}{2L} \int_0^L \left(  2d \left\langle \nu, W \right\rangle + d^2 \left\langle \nabla_T W, T \right\rangle \right) \, \mathrm{d}s. \\
\end{equation*}
Finally, to account for the length constraint \eqref{eq:length} we need to apply the Lagrange Multiplier Principle. We compute the first variation of the length functional
\[
\mathcal{L}\left[ \gamma \right] = \int_0^1 \vert \gamma'(t) \vert \, \mathrm{d}t = \int_0^1 v(t) \, \mathrm{d}t.
\]
as
\begin{align}
\frac{\mathrm{d}}{\mathrm{d}w}\Bigr|_{w=0} \mathcal{L}\left[ \gamma_w \right] &= \frac{\mathrm{d}}{\mathrm{d}w}\Bigr|_{w=0} \int_0^1 v(t) \, \mathrm{d}t \nonumber \\
&=  \int_0^1 W(v) \, \mathrm{d}t \nonumber \\
&= \int_0^1 \left\langle \nabla_T W, T \right\rangle  v  \, \mathrm{d}t \nonumber \\
&= \int_0^L \left\langle \nabla_T W, T \right\rangle \, \mathrm{d}s \nonumber
\end{align}
so the Euler Lagrange equation will have a term of the form
\[
\lambda \int_0^L \left\langle \nabla_T W, T \right\rangle \, \mathrm{d}s
\]
added to it, for some constant $\lambda \in \mathbb{R}$ (the Lagrange multiplier). \\
Combining the three calculations of the first variations and reintroducing the subscript $p$ to our notation, we obtain the following Euler-Lagrange equation in the weak sense:
\begin{equation*}\label{Lagrangep}
\int_0^L \left\langle E_1 , W \right \rangle + \left\langle E_2 , \nabla_{T_p} W \right \rangle + \left\langle E_3 , \nabla_{T_p}^2 W \right \rangle \, \mathrm{d}s = 0,
\end{equation*}
satisfied for every test variation $W$ (smooth and compactly supported), where
\begin{align*}
E_1 &=  \frac{1}{L}K_p^{1-p}R(\hvarphi_p, T_p) T_p  + \frac{\sigma}{L} d_p \nu_p ,\\
E_2 &=  \frac{1}{L}\frac{2p-1}{p} K_p^{1-p} \vert \hvarphi_p \vert^{p/(p-1)} T_p + \lambda_p T_p + \frac{\sigma}{2L}d_p^2 T_p ,
\intertext{and}
E_3 &=  \frac{1}{L}K^{1-p}\hvarphi_p.
\end{align*}
After writing out the above Euler-Lagrange equation in local co-ordinates, we can make use of standard regularity results to deduce that $\hvarphi_p \in W^{2,1}$, and this regularity allows us to integrate the Euler-Lagrange equation by parts:
\begin{equation*}
\int_0^L \left\langle E_1 , W \right \rangle - \left\langle \nabla_{T_p} E_2 , W \right \rangle + \langle \nabla_{T_p}^2 E_3 , W \rangle \, \mathrm{d}s = 0,
\end{equation*}
having used the fact that $d_p^2$ is Lipschitz continuous with Lipschitz constant 1 and thus is in $W^{1,\infty}$. The Fundamental Theorem of Calculus of Variations then implies that the Euler-Lagrange equation \eqref{eq:ELunnorm} holds a.e. in $(0,L)$, finishing the proof.
\end{proof}

\noindent Proposition \ref{prop:EL} highlights a fundamental difference between $p$-elastica and $\infty$-elastica: when $p$ is finite the fact that $\hvarphi_p$ lies in $W^{2,1}(0,L)$ means that the curvature vector is continuous, yet the curvature vector of an $\infty$-elastica need not be continuous and indeed in some cases is guaranteed by the constraints of the problem and the results of Theorem \ref{thm:limitingeqns} to be discontinuous.


\section{Limiting Equations for $\infty$-elastica}\label{sec:eqns}

\noindent In this section we show that there is a limiting curve which the $p$-minimisers converge to, along which a limiting version of a normalisation of the Euler-Lagrange equation \eqref{eq:ELunnorm} is satisfied. In addition we derive another equation from our definition of the rescaled curvature vector; these two equations together act as a system of equations that can be considered in a sense to be Euler-Lagrange equations for the $L^\infty$ problem. This proves most of statement 1) of Theorem \ref{thm:limitingeqns}, except that the limiting $\varphi$ is not identically zero. \\

\noindent Before we can obtain the limiting Euler-Lagrange equations \eqref{eq:EL1}--\eqref{eq:EL2}, we must know that the terms in \eqref{eq:ELunnorm} do not become too large when $p \rightarrow \infty$. To this end, we prove two lemmata. The first lemma concerns the size of the $\mathcal{K}_p$ functionals when evaluated at minimisers of $\mathcal{J}_p^\sigma \left[ \,\bigcdot\, ; \tgamma\right]$.
\begin{lem}\label{lem:Kpconv}
Suppose $\tgamma$ is an $\infty$-elastica and let $(\gamma_p)$ be a sequence of admissible curves minimising $\mathcal{J}_p^\sigma \left[ \,\bigcdot\, ; \tgamma\right]$. Assume that $\sigma$ is large enough that both parts of Lemma \ref{lem:uniquegeod} apply. Then the sequence $(K_p)$ converges to $K := \mathcal{K}_\infty \left[ \tgamma \right]$.
\end{lem}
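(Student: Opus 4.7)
The plan is to prove the two inequalities $\limsup_{p\to\infty} K_p \leq K$ and $\liminf_{p\to\infty} K_p \geq K$ separately; these together give the convergence claimed.

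For the upper bound, the chain of inequalities \eqref{eq:ineqs} established in the proof of Lemma \ref{lem:uniquegeod} already yields $K_p = \mathcal{K}_p[\gamma_p] \leq \mathcal{K}_\infty[\tgamma] = K$ for every $p$, so taking $\limsup$ gives the estimate directly.

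For the lower bound, I would rely on the weak $W^{2,q}$ convergence $\gamma_p \rightharpoonup \tgamma$ from part~1) of Lemma \ref{lem:uniquegeod}. Two ingredients combine. First, H\"older's inequality with the normalised measure $\mathrm{d}s/L$ delivers the monotonicity $\mathcal{K}_q[\gamma_p] \leq \mathcal{K}_p[\gamma_p] = K_p$ whenever $q \leq p$. Second, the functional $\gamma \mapsto \mathcal{K}_q[\gamma]$ is weakly lower semicontinuous on $W^{2,q}(0,L)$: writing $\nabla_T T = \gamma'' + \Gamma(\gamma)(\gamma',\gamma')$ in local coordinates, one sees that Rellich--Kondrachov gives strong $C^1$ convergence $\gamma_p \to \tgamma$, so the Christoffel term converges strongly in $L^q$ while the second-derivative term converges only weakly in $L^q$; the usual weak lower semicontinuity of the $L^q$ norm then applies. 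Combining these observations, for every fixed $q \in (1,\infty)$,
\[
\mathcal{K}_q[\tgamma] \leq \liminf_{p\to\infty} \mathcal{K}_q[\gamma_p] \leq \liminf_{p\to\infty} K_p.
\]
Sending $q \to \infty$ on the left, and using that $\tgamma \in W^{2,\infty}$ so that $\mathcal{K}_q[\tgamma] \to \mathcal{K}_\infty[\tgamma] = K$, gives $K \leq \liminf_{p\to\infty} K_p$, which closes the argument.

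I do not anticipate any serious obstacle: the only step deserving a little care is the weak-$W^{2,q}$ lower semicontinuity of $\mathcal{K}_q$ in the Riemannian setting. This is justified by the coordinate splitting sketched above, which is legitimate because all admissible curves remain in the precompact set $U$ introduced in Section~\ref{sec:approx} and can therefore be covered by finitely many charts.
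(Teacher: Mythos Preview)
Your argument is correct and follows essentially the same route as the paper: both the upper bound via \eqref{eq:ineqs} and the lower bound via weak $W^{2,q}$ lower semicontinuity of $\mathcal{K}_q$ combined with H\"older monotonicity and $\mathcal{K}_q[\tgamma]\to\mathcal{K}_\infty[\tgamma]$ are exactly the ingredients the paper uses. The only cosmetic difference is that the paper packages the lower bound through $\mathcal{J}_p^\sigma$ (reusing \eqref{eq:ineqs2}) and then subtracts off the penalisation term using uniform convergence, whereas you bypass the penalty and work directly with $\mathcal{K}_q$; the substance is the same.
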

\begin{proof}
Take $\sigma$ large enough so the situation in the proof of Lemma \ref{lem:uniquegeod} applies. Consider the inequalities in \eqref{eq:ineqs}. They show that the sequence of real numbers given by $(\mathcal{J}_p^\sigma\left[ \gamma_p ; \tgamma \right])$ is increasing and bounded above by $\mathcal{K}_\infty \left[ \tgamma \right]$, hence convergent with
\[
\lim_{p \rightarrow \infty} \mathcal{J}_p^\sigma\left[ \gamma_p ; \tgamma \right] \leq \mathcal{K}_\infty \left[ \tgamma \right].
\]
However, the inequalities in \eqref{eq:ineqs2} combined with the fact that $\tgamma = \gamma_\infty$ mean that
\[
\mathcal{K}_\infty \left[ \tgamma \right] = \mathcal{J}_\infty^\sigma \left[ \tgamma ; \tgamma \right] \leq \lim_{q \rightarrow \infty} \liminf_{i \rightarrow \infty} \mathcal{J}_q^\sigma \left[ \gamma_{p_i} ; \tgamma \right] \leq \liminf_{i \rightarrow \infty} \mathcal{J}_{p_i}^\sigma \left[ \gamma_{p_i} ; \tgamma \right] = \lim_{p \rightarrow \infty} \mathcal{J}_p^\sigma\left[ \gamma_p ; \tgamma \right].
\]
It follows that $\lim_{p \rightarrow \infty} \mathcal{J}_p^\sigma\left[ \gamma_p ; \tgamma \right] = \mathcal{K}_\infty \left[ \tgamma \right]$. Now, 
\[
\mathcal{K}_\infty \left[ \tgamma \right] = \lim_{p \rightarrow \infty} \mathcal{J}_p^\sigma\left[ \gamma_p ; \tgamma \right] = \lim_{p \rightarrow \infty} \mathcal{K}_p \left[ \gamma_p \right] + \lim_{p \rightarrow \infty} \frac{\sigma}{2L} \int_0^L d(\gamma_p(s),\text{seg}_\tgamma (s))^2 \, \mathrm{d}s,
\]
and by the uniform convergence of $\gamma_p$ to $\tgamma$ the second term on the right hand side vanishes, finishing the proof.
\end{proof}

\noindent The second lemma concerns the size of the Lagrange multipliers $\lambda_p$. 

\begin{lem}\label{lem:lagrangebound}
Suppose $\tgamma$ is an $\infty$-elastica, and that $\sigma$ is as large as required by Lemma \ref{lem:uniquegeod}. Then the sequence of Lagrange multipliers $(\lambda_p)$ in the Euler-Lagrange equation \eqref{eq:ELunnorm} is bounded.
\end{lem}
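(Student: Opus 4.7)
The plan is to convert the pointwise Euler--Lagrange equation \eqref{eq:ELunnorm} into a scalar identity from which $\lambda_p$ can be isolated. Taking the inner product with $\nabla_{T_p} T_p$ kills every contribution parallel to $T_p$, because $\langle T_p, \nabla_{T_p} T_p \rangle = 0$; in particular the two terms along $T_p$ in \eqref{eq:ELunnorm} vanish, while the $\lambda_p$-term survives as $\lambda_p \kappa_p^2$. Integrating the resulting scalar identity over $[0,L]$ then yields an equality of the form
\[
\lambda_p \int_0^L \kappa_p^2 \, ds = J_R^{(p)} + J_d^{(p)} + J_{d^2}^{(p)} + J_\varphi^{(p)} + \frac{2p-1}{pL} K_p^{1-p} \int_0^L \kappa_p^{p+2} \, ds,
\]
where $J_R^{(p)}, J_d^{(p)}, J_{d^2}^{(p)}, J_\varphi^{(p)}$ are the integrated Riemann--curvature, distance-gradient, $d_p^2 \kappa_p^2$, and $\nabla_{T_p}^2 \hvarphi_p$ contributions respectively. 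The strategy is to show that the left-hand denominator is uniformly bounded below for large $p$ and that the right-hand side is uniformly bounded in absolute value, so that solving for $\lambda_p$ gives the desired estimate.

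For the denominator, Lemma \ref{lem:uniquegeod} provides $\gamma_p \rightharpoonup \tgamma$ in $W^{2,2}(0,L)$, so weak lower semicontinuity of the $L^2$-norm gives $\liminf_p \int_0^L \kappa_p^2 \, ds \geq \int_0^L \kappa_{\tgamma}^2 \, ds$, which is strictly positive since $\tgamma$ is not a geodesic. For $J_R^{(p)}$, the bound $\lvert R(\hvarphi_p, T_p)T_p \rvert \leq C \lvert \hvarphi_p \rvert = C \kappa_p^{p-1}$ (from boundedness of sectional curvature on the compact set $U$) together with $\int \kappa_p^p \, ds = L K_p^p$ gives $\lvert J_R^{(p)} \rvert \leq C K_p$, uniformly bounded by Lemma \ref{lem:Kpconv}. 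The distance terms are controlled using $d_p \leq \mathrm{diam}(U)$ and the Hölder-consequence $\|\kappa_p\|_{L^q} \leq L^{1/q}K_p$ for $1 \leq q \leq p$. The term $J_\varphi^{(p)}$ will be handled by integration by parts; to avoid badly behaved boundary contributions (since $\nabla_{T_p} T_p$ lacks clean pointwise boundary values) I would work from the weak form of \eqref{eq:ELunnorm}, testing against $W = \chi \nabla_{T_p} T_p$ with a smooth cutoff $\chi$ vanishing to appropriate order near $\{0, L\}$, so that the boundary pieces vanish and the remaining integrals are controlled by the uniform $W^{2,q}$ bounds on $\gamma_p$.

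The hard part will be the final term $\frac{2p-1}{pL} K_p^{1-p} \int_0^L \kappa_p^{p+2} \, ds$. A naive estimate via $\kappa_p^{p+2} \leq \|\kappa_p\|_\infty^2 \kappa_p^p$ would suffice if only $\|\kappa_p\|_\infty$ were uniformly bounded, but this is not supplied by Lemma \ref{lem:uniquegeod} (which only gives $L^q$-bounds $\|\kappa_p\|_{L^q} \leq L^{1/q}K$ for each fixed $q \leq p$). To overcome this I expect to exploit the structure of the Euler--Lagrange equation itself: either by bootstrapping the $W^{2,1}$-regularity of $\hvarphi_p$ (which in one dimension Sobolev-embeds into $C^1$) to get a uniform $L^\infty$ bound on $\hvarphi_p$, which, via $\kappa_p = \lvert \hvarphi_p \rvert^{1/(p-1)}$, would translate into an $L^\infty$ bound on $\kappa_p$ uniform in $p$; or by rewriting the integrand as $\kappa_p^2 \cdot K_p^p (\kappa_p/K_p)^p$ and exploiting the fact that the measure $(\kappa_p/K_p)^p \, ds$ has total mass $L$, together with an absorption argument that uses the matching $\kappa_p^2$ factor already present on the left. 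Once this term is under control, dividing through yields $\lvert \lambda_p \rvert \leq C$ uniformly in $p$, which is the required conclusion.
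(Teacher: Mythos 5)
There is a genuine gap at the core of your argument, and you have correctly located it yourself: the term $\frac{2p-1}{pL}K_p^{1-p}\int_0^L \kappa_p^{p+2}\,ds$ produced by testing against $\nabla_{T_p}T_p$ is not controllable with the information available at this stage. The only a priori bound is $\int_0^L\kappa_p^p\,ds = LK_p^p$, and no H\"older manipulation converts this into a bound on $K_p^{1-p}\int\kappa_p^{p+2}$; one genuinely needs something like $\|\kappa_p\|_{L^\infty}\lesssim 1$ uniformly in $p$. Neither of your proposed fixes supplies this. The bootstrap route is circular: the uniform $L^\infty_{\mathrm{loc}}$ bounds on $\varphi_p=K_p^{1-p}\hvarphi_p$ established in Section \ref{sec:eqns} use the boundedness of $\lambda_p$ as an input (see the estimate of $I_3$ there), and the per-$p$ $W^{2,1}$ regularity of $\hvarphi_p$ from Proposition \ref{prop:EL} carries constants depending on $p$ and on $\lambda_p$; moreover a genuinely uniform bound $\|\hvarphi_p\|_{L^\infty}\le C$ would force $\|\kappa_p\|_{L^\infty}\le C^{1/(p-1)}\to 1$, which is false whenever $K>1$. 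The absorption route also fails: the troublesome term is nonnegative, so after pairing with $\nabla_{T_p}T_p$ and integrating you obtain $\lambda_p\int_0^L\kappa_p^2\,ds = (\text{bounded}) + (\text{nonnegative, possibly unbounded})$, which yields only a lower bound on $\lambda_p$, and there is no smallness or sign information that would let you absorb the term into the left-hand side. A secondary problem is that $W=\chi\nabla_{T_p}T_p$ is not an admissible test field: justifying the integration by parts of the $\nabla_{T_p}^2\hvarphi_p$ term against it requires two covariant derivatives of $\nabla_{T_p}T_p$, i.e.\ third derivatives of $\gamma_p$, for which no $p$-uniform bounds exist.

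The paper avoids all of this by testing against a \emph{fixed smooth} vector field: since $\nabla_{T_p}T_p\rightharpoonup\nabla_T T\neq 0$ in $L^q$, one chooses a smooth compactly supported $\psi$ with $\int_0^L\langle\nabla_T T,\psi\rangle\,ds=1$, transports its coordinate components to $\gamma_p$, and pairs the rearranged equation $\tilde E_p = L\lambda_p\nabla_{T_p}T_p$ with $\psi_p$. Weak convergence keeps the pairing on the right bounded below by $\tfrac12$ for large $p$, while every term of $\int_0^L\langle\tilde E_p,\psi_p\rangle\,ds$ is bounded independently of $p$ because, after moving all derivatives onto the smooth $\psi_p$, the curvature weights that appear are only $\kappa_p^{p-1}$ and $\kappa_p^p$, which the prefactor $K_p^{1-p}$ tames via $K_p^{1-p}\int\kappa_p^{p-1}\,ds\lesssim 1$ and $K_p^{1-p}\int\kappa_p^{p}\,ds = LK_p\lesssim 1$. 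Your choice of test field introduces exactly the extra factor of $\kappa_p^2$ that destroys this balance, and that is where the proof breaks.
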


\noindent Before proving this lemma, we introduce some common but not ubiquitous notation: given sequences $(\alpha_p)$, $(\beta_p)$ we say that $\alpha_p \lesssim \beta_p$ if there exists a constant $c$ independent of $p$ such that $\alpha_p \leq c \beta_p$ for every $p \in \mathbb{N}$.

\begin{proof}[Proof of Lemma \ref{lem:lagrangebound}]
Rearranging \eqref{eq:ELunnorm} gives us the equation
\begin{equation}\label{eq:lagbound1}
\tilde{E}_p  = L \lambda_p \nabla_{T_p} T_p,
\end{equation}
where for $K_p = \mathcal{K}_p \left[ \gamma_p \right]$ we have
\begin{align*}
\tilde{E}_p := &K_p^{1-p}\nabla_{T_p}^2 \left( \kappa_p^{p-2} \nabla_{T_p} T_p \right) + K_p^{1-p}\kappa_p^{p-2} R(\nabla_{T_p} T_p, T_p ) T_p + \sigma d_p \nu_p \\
&+ K_p^{1-p}\frac{2p-1}{p} \nabla_{T_p} \left( \kappa_p^p T_p \right) - \frac{\sigma}{2} (d_p^2)' T - \frac{\sigma}{2} d_p^2 \nabla_{T_p} T_p.
\end{align*}
Note here that we have combined two of the terms from \eqref{eq:ELunnorm} into a single term expressed as a derivative:
\[
\frac{1}{L}\frac{2p-1}{p}K_{p}^{1-p}\vert \hvarphi_{p} \vert \kappa_{p} \nabla_{T_{p}} T_{p} + \frac{1}{L}\frac{2p-1}{p-1}K_{p}^{1-p} \kappa_{p} \vert \hvarphi_{p} \vert' T_{p} =  \frac{1}{L} \frac{2p-1}{p} \nabla_{T_{p}} \left( \langle \hvarphi_p, \nabla_{T_p} T_p \rangle T_p \right).
\]
If eventually the $\lambda_p$s all become 0, the lemma is immediate, so assume this is not the case and discard from the sequence any $\lambda_p$s which are zero. Then
\[
\frac{1}{L \lambda_p} \tilde{E}_p = \nabla_{T_p} T_p \rightharpoonup \nabla_T T \neq 0 \text{ in }L^q(0,L)\text{ for any }q < \infty,
\]
recalling that $\nabla_T T$ is not identically zero because it is assumed a priori that no geodesic solutions exist. Since $\nabla_T T$ does not vanish, there exists a test vector field $\psi$ such that $\int_0^L \left\langle \nabla_T T, \psi \right\rangle \, \mathrm{d}s = 1$. Without loss of generality, we can assume that the support of $\psi$ is contained in a single co-ordinate chart and thus we can write $\psi(s) = \psi^i \vecpar{i} \rvert_{\gamma(s)}$. In these co-ordinates, for sufficiently large $p \in \mathbb{N}$, consider the test vector field $\psi_p$ defined along $\gamma_p$ by $\psi^i \vecpar{i} \rvert_{\gamma_p(s)}$ (this is valid due to the uniform convergence of $\gamma_p$ to $\gamma$), so the coefficients $\psi^i$ are smooth and do not depend on $p$.
Taking the inner product of $\tilde{E}_p$ with $\psi_p$, integrating from 0 to $L$ and integrating by parts tells us that
\begin{align*}
\int_0^L \left\langle \tilde{E}_p, \psi_p \right\rangle \, \mathrm{d}s &\leq I_1 + I_2 + I_3 + I_4 + I_5 + I_6,
\end{align*}
where
\begin{align*}
I_1 &= K_p^{1-p}\int_0^L \kappa_p^{p-2} \vert \nabla_{T_p} T_p \vert \vert \nabla_{T_p}^2 \psi_p  \vert  \, \mathrm{d}s, \\
I_2 &= K_p^{1-p}\int_0^L \kappa_p^{p-2} \vert R\left(\nabla_{T_p} T_p, T_p \right) T_p \vert \vert \psi_p  \vert \, \mathrm{d}s, \\
I_3 &= \sigma \int_0^L d_p \vert \nu_p  \vert \vert \psi_p  \vert  \, \mathrm{d}s, \\
I_4 &= 2K_p^{1-p}\int_0^L \vert \kappa_p^{p-2} \nabla_{T_p} T_p \vert \vert \nabla_{T_p} \psi_p  \vert \, \mathrm{d}s, \\
I_5 &= \frac{\sigma}{2} \int_0^L \vert (d_p^2)' \vert  \vert \psi_p  \vert  \, \mathrm{d}s, \\
\intertext{and}
I_6 &= \frac{\sigma}{2} \int_0^L d_p^2 \vert \nabla_{T_p} T_p \vert \vert \psi_p \vert \, \mathrm{d}s.
\end{align*}
Using the definition of $\psi_p$, H\"{o}lder's inequality, the definitions of $\kappa_p$ and $K_p$, Lemma \ref{lem:Kpconv}, the boundedness of $R$ on compact subsets of $M$, and the boundedness of $d_p^2$ and $(d_p^2)'$, it follows that each of the six integrals $I_n$ is bounded above by some constant $C(\psi)$ depending on the components $\psi^i$ of $\psi$ but independent of $p$, and hence so too is $\int_0^L \langle \tilde{E}_p, \psi \rangle \, \mathrm{d}s$. \\
For example, we have that
\begin{align*}
I_2 &\lesssim \norm{\psi_p}_{L^\infty(0,L)} K_p^{1-p} \int_0^L \kappa_p^{p-1} \, \mathrm{d}s \\
&\lesssim C(\psi) K_p^{1-p} \norm{\kappa_p}_{L^p}^{p-1} \norm{1}_{L^p} \\
&\lesssim C(\psi),
\end{align*}
where we have made the dependence on $\psi$ explicit. Here we have used the boundedness of $R$ and $\psi$ and the definition of $\kappa_p$ in the first line, H\"{o}lder's inequality in the second line, and the definition of $K_p$ in the third line.
The other five integrals are treated similarly. \\
By the weak $L^q(0,L)$ convergence of $\nabla_{T_p} T_p$ and the choice of $\psi$,
\[
\int_0^L \left\langle \frac{1}{L \lambda_p} \tilde{E}_p, \psi_p \right\rangle \, \mathrm{d}s \geq \frac{1}{2}
\]
for sufficiently large $p$, i.e.
\[
L \lambda_p \leq 2\int_0^L \left\langle \tilde{E}_p, \psi_p \right\rangle \, \mathrm{d}s \leq 2 C(\psi).
\]
This gives the desired bound.
\end{proof}
\noindent With these two lemmata established we are well-positioned to begin to prove Theorem \ref{thm:limitingeqns}. \\

\noindent \textit{Beginning of proof of Theorem \ref{thm:limitingeqns}.}
We begin by normalising equation \eqref{eq:ELunnorm}, setting $\varphi_p = K_p^{1-p} \hvarphi_p$. Thus we obtain the equation
\begin{align}\label{eq:ELnorm}
\begin{split}
0 = &\, R(\varphi_p, T_p) T_p  + \sigma d_p \nu_p - L \lambda_p \nabla_{T_p} T_p + \frac{2p-1}{p}\vert \varphi_p \vert \kappa_p \nabla_{T_p} T_p \\
&+ \frac{2p-1}{p-1} \kappa_p \vert \varphi_p \vert' T_p - \frac{\sigma}{2} (d_p^2)' T_p - \frac{\sigma d_p^2}{2} \nabla_{T_p} T_p +  \nabla_{T_p}^2 \varphi_p.
\end{split}
\end{align}
Our goal now is to obtain higher regularity of the $\varphi_p$ terms along with bounds on them which are uniform in $p$. \\
Choose $\psi \in C^\infty_c(0,L)$ to be a test function such that
\[
\norm{ \frac{\psi'(s)^2}{\psi(s)} }_{L^\infty(0,L)} < \infty.
\]
For example, the square of any test function suffices here, as does a rescaling of the standard ``bump'' function given by $\exp(-1/(1-x^2))$. 
Now consider the function $f(s) = \frac{1}{2}\vert\varphi_p(s)\vert^2$. Observing that $\int_0^L (f' \psi^2)' \, \mathrm{d}s = 0$ then integrating by parts shows that
\begin{align*}
0 &= \int_0^L \psi^2 f'' - 2 \psi \psi'' f - 2 (\psi')^2 f \, \mathrm{d}s.
\end{align*}
Substituting in the definition of $f$ gives
\begin{equation*}
0 = \int_0^L \psi^2\hiip{\varphi_p}{\nabla_{T_p}^2 \varphi_p} + \vert \psi \nabla_{T_p} \varphi_p \vert^2 -\psi \psi'' \vert \varphi_p \vert^2 - (\psi')^2 \vert \varphi_p \vert^2   \, \mathrm{d}s.
\end{equation*}
Again substituting, this time using the Euler-Lagrange equation \eqref{eq:ELnorm} to replace the $\nabla_{T_p}^2 \varphi_p$ term, we find that
\begin{equation}\label{eq:Is}
\int_0^L \vert \psi \nabla_{T_p} \varphi_p \vert^2 \, \mathrm{d}s \leq I_1 + I_2 + I_3 + I_4 + I_5 + I_6 + I_7,
\end{equation}
where
\begin{align*}
I_1 &= \int_0^L \psi^2 \vert R(\varphi_p,T_p)T_p \vert\vert \varphi_p \vert  \, \mathrm{d}s ,\\
I_2 &= \sigma\int_0^L \psi^2 d_p \vert \varphi_p \vert \, \mathrm{d}s ,\\
I_3 &= L \lambda_p \int_0^L  \psi^2 \vert \varphi_p \vert\vert \nabla_{T_p} T_p \vert \, \mathrm{d}s ,\\
I_4 &= \frac{2p - 1}{p} \int_0^L \psi^2 \kappa_p \vert \varphi_p \vert^2 \vert \nabla_{T_p} T_p \vert  \, \mathrm{d}s ,\\
I_5 &= \frac{\sigma}{2} \int_0^L  \psi^2 d_p^2 \vert \varphi_p \vert \vert \nabla_{T_p} T_p \vert \, \mathrm{d}s, \\
I_6 &= \int_0^L \vert \psi \vert \vert \psi'' \vert \vert \varphi_p \vert^2  \, \mathrm{d}s, \\
\intertext{and}
I_7 &= \int_0^L \vert \psi' \vert^2 \vert \varphi_p \vert^2  \, \mathrm{d}s. 
\end{align*}
The contributions from all the terms in \eqref{eq:ELnorm} parallel to $T_p$ vanish since we take the inner product $\hiip{\varphi_p}{\nabla_{T_p}^2 \varphi_p}$ and $T_p$ is orthogonal to $\varphi_p$. \\
\noindent The first integral can be estimated by
\begin{align*}
I_1 &\lesssim \int_0^L  \psi ^2 \vert \varphi_p \vert^2 \, \mathrm{d}s \\
&\lesssim K_p^{2-2p} \int_0^L \psi^2 \kappa_p^{2p-2} \, \mathrm{d}s \\ 
&\lesssim K_p^{2-2p} \norm{\psi}_{L^\infty} \norm{\psi \kappa_p^{p-2}}_{L^\infty}  \int_0^L \kappa_p^p \, \mathrm{d}s \\
&\lesssim K_p^{2-p} \norm{\psi}_{L^\infty} \norm{\psi \kappa_p^{p-2}}_{L^\infty}  \\
&\lesssim  \norm{\psi}_{L^\infty}^{\frac{p}{p-1}} \norm{\psi \varphi_p}_{L^\infty}^{\frac{p-2}{p-1}} \\ 
\intertext{having used the definitions of $\varphi_p$ and $K_p$ as well as the boundedness of the Riemann curvature tensor. Now, since $\norm{f}_{L^\infty} \leq \norm{f}_{L^1}/L + \norm{f'}_{L^1}$ (this can be seen e.g. by the Sobolev embedding of $W^{1,\infty}(0,L)$ into $L^\infty(0,L)$) we get that}
I_1 &\lesssim  \norm{\psi}_{L^\infty}^{\frac{p}{p-1}} \left( \sqrt{L} \norm{\psi \nabla_{T_p} \varphi_p}_{L^2} + \norm{\varphi_p}_{L^1} \normm{\psi'}_{L^\infty} + \frac{1}{L} \norm{\psi \varphi_p}_{L^1} \right)^{\frac{p-2}{p-1}} \\
&\lesssim  \norm{\psi}_{L^\infty}^{\frac{p}{p-1}} \left(\norm{\psi \nabla_{T_p} \varphi_p}_{L^2} + \norm{\varphi_p}_{L^1} \normm{\psi'}_{L^\infty} + \norm{\psi \varphi_p}_{L^1} \right)^{\frac{p-2}{p-1}} \\
&\lesssim  \norm{\psi}_{L^\infty}^{\frac{p}{p-1}}\left(\norm{\psi \nabla_{T_p} \varphi_p}_{L^2}^{\frac{p-2}{p-1}} +  \left( \norm{\varphi_p}_{L^1} \normm{\psi'}_{L^\infty} + \norm{\varphi_p}_{L^1}\norm{\psi}_{L^\infty}  \right)^{\frac{p-2}{p-1}}\right)  \\
&\lesssim   \norm{\psi}_{L^\infty}^{\frac{p}{p-1}}\norm{\psi \nabla_{T_p} \varphi_p}_{L^2}^{\frac{p-2}{p-1}} + \normm{\psi'}_{L^\infty}^{\frac{p-2}{p-1}} \norm{\psi}_{L^\infty}^{\frac{p}{p-1}} + \norm{\psi}_{L^\infty}^2 \\
\intertext{because of the uniform $L^1$ bound on $\varphi_p$ which comes from its definition and an immediate application of H\"{o}lder's inequality. Applying Young's inequality to the first term of this expression, we find that}
I_1 &\leq \varepsilon \frac{p-2}{2p-2} \norm{\psi \nabla_{T_p} \varphi_p}_{L^2}^2 + C \left( \varepsilon^{\frac{2-p}{p}} \frac{p}{2p-2} \norm{\psi}_{L^\infty}^{2} + \normm{\psi'}_{L^\infty}^{\frac{p-2}{p-1}} \norm{\psi}_{L^\infty}^{\frac{p}{p-1}} + \norm{\psi}_{L^\infty}^2 \right) \\
&\leq \varepsilon \norm{\psi \nabla_{T_p} \varphi_p}_{L^2}^2 + C \left( \varepsilon^{\frac{2-p}{p}}\norm{\psi}_{L^\infty}^{2}  + \normm{\psi'}_{L^\infty}^{\frac{p-2}{p-1}} \norm{\psi}_{L^\infty}^{\frac{p}{p-1}} + \norm{\psi}_{L^\infty}^2 \right) \\
\intertext{where $C \in \mathbb{R}$ is some constant independent of $p$. We have therefore shown that}
I_1 &\leq \varepsilon \norm{\psi \nabla_{T_p} \varphi_p}_{L^2}^2 + C,
\end{align*}
where $C$ is a constant which depends on $\varepsilon$ and $\psi$ but importantly not on $p$. \\
Similarly, the fourth integral can be estimated by
\begin{align*}
I_4 &\leq C \norm{\psi^{\frac{p-2}{p-1}}}_{L^\infty} \normm{\kappa_p^p \psi^{\frac{p}{p-1}}}_{L^\infty} K_p^{2-2p} \int_0^L \kappa_p^p \, \mathrm{d}s \\
&\lesssim \norm{\psi}^{\frac{p-2}{p-1}}_{L^\infty} K_p^{2-p} \norm{\kappa_p^{p-1} \psi}_{L^\infty}^{\frac{p}{p-1}} \\
&\lesssim \norm{\psi}^{\frac{p-2}{p-1}}_{L^\infty} \norm{\psi \varphi_p }_{L^\infty}^{\frac{p}{p-1}} \\
&\lesssim \norm{\psi}_{L^\infty}^{\frac{p-2}{p-1}} \left( \norm{\psi \nabla_{T_p} \varphi_p}_{L^2} + \normm{\psi' \varphi_p}_{L^1} + \norm{\psi \varphi_p}_{L^1} \right)^{\frac{p}{p-1}} \\ 
&\lesssim \norm{\psi}_{L^\infty}^{\frac{p-2}{p-1}}\norm{\psi \nabla_{T_p} \varphi_p}_{L^2}^{\frac{p}{p-1}} + \normm{\psi'}_{L^\infty}^{\frac{p}{p-1}}\norm{\psi}_{L^\infty}^{\frac{p-2}{p-1}} + \norm{\psi}_{L^\infty}^2 \\
&\lesssim \varepsilon \norm{\psi \nabla_{T_p} \varphi_p}_{L^2}^{2} + \varepsilon^{\frac{-p}{p-2}}\norm{\psi}_{L^\infty}^{\frac{-2}{p-1}} + \normm{\psi'}_{L^\infty}^{\frac{p}{p-1}}\norm{\psi}_{L^\infty}^{\frac{p-2}{p-1}} + \norm{\psi}_{L^\infty}^2 
\intertext{which again results in an inequality of the form}
I_4 &\leq \varepsilon \norm{\psi \nabla_{T_p} \varphi_p}_{L^2}^2 + C.
\end{align*}
The sixth and seventh integrals yield similar estimates, where in the calculations for $I_7$ we make use of the assumption on the form of $\psi$:
\begin{align*}
I_7 &= \int_0^L \vert \psi' \vert^2 \vert \varphi_p \vert^2  \, \mathrm{d}s \\
&= \int_0^L \frac{\vert \psi' \vert^2}{\vert \psi \vert} \vert \varphi_p \vert \vert \psi \varphi_p \vert \, \mathrm{d}s \\
&\leq \norm{\frac{(\psi')^2}{\psi}}_{L^\infty} \norm{\psi \varphi_p}_{L^\infty} \norm{\varphi_p}_{L^1},
\end{align*}
and from this point the calculations with the $\norm{\psi \varphi_p}_{L^\infty}$ follow similarly to the calculations for $I_1$. \\
\noindent We also compute from their definitions that
\[
I_2, I_3, I_5 \leq C(\psi)
\]
using Lemma \ref{lem:Kpconv} and the uniform $L^1$ bound on $\kappa_p \varphi_p$ coming from its definition. \\
\noindent Combining these seven estimates and picking $\varepsilon$ sufficiently small, we obtain a uniform bound for $\normm{\psi \nabla_{T_p} \varphi}_{L^2}^2$. By picking $\psi$ such that $\psi \geq 1$ on $[a,b] \subset (0,L)$, we get a bound on $\normm{\nabla_{T_p} \varphi_p}_{L^2([a,b])}^2$ which is uniform in $p$. From the Sobolev Embedding Theorem we obtain a uniform bound on $\normm{\varphi_p}_{L^\infty([a,b])}$, and then going back to the Euler-Lagrange equation \eqref{eq:ELnorm} we obtain a uniform $L^2([a,b])$ bound on $\nabla_{T_p}^2 \varphi_p$ by expressing it as the sum of terms each possessing such a uniform bound. Applying the Sobolev Embedding Theorem again we get a uniform bound on $\normm{\nabla_{T_p} \varphi_p}_{L^\infty([a,b])}$, and again going back to \eqref{eq:ELnorm} we get a uniform bound on $\normm{\nabla_{T_p}^2 \varphi_p}_{L^\infty([a,b])}$. \\
From this we obtain a subsequence of $(\varphi_p)$ which converges weakly in $W^{2,q}([a,b])$ to a limit $\varphi \in W^{2,\infty}([a,b])$ for every $q < \infty$. Since $[a,b]$ was arbitrary we have that in fact $\varphi \in W^{2,\infty}_{\text{loc}}(0,L)$. Now, the sequence $(\lambda_p)$ is bounded by Lemma \ref{lem:lagrangebound} and so has a convergent subsequence, and by the uniform convergence of the curves $\gamma_p$ to $\gamma$ and the distance functions $d_p$ to $0$ we see after expressing \eqref{eq:ELnorm} in local co-ordinates and taking a suitable subsequence (not explicitly labelled) that we obtain the limiting equation \eqref{eq:EL1}, with the final term coming from the fact that
\[
\frac{2p-1}{p} \nabla_{T_p} \left( \langle \varphi_p, \nabla_{T_p} T_p \rangle T_p \right) = \frac{2p-1}{p}K_{p}^{1-p}\vert \hvarphi_{p} \vert \kappa_{p} \nabla_{T_{p}} T_{p} + \frac{2p-1}{p-1}K_{p}^{1-p} \kappa_{p} \vert \hvarphi_{p} \vert' T_{p}.
\]

\noindent To obtain equation \eqref{eq:EL2}, we consider
\[
\nabla_{T_p} T_p = \vert \varphi_p \vert^{1/(p-1)} K_p \frac{\varphi_p}{\vert \varphi_p \vert}
\]
and let $[a,b] \subset (0,L)$ be arbitrary. By Lemma \ref{lem:Kpconv}, $K_p \rightarrow K$. As $\varphi$ is continuous, and the convergence of the $\varphi_p$s to $\varphi$ is locally uniform, for any fixed $t$ with $\varphi(t) \neq 0$ there exist $\delta, \varepsilon > 0$ such that $\delta \leq \vert \varphi_p \vert \leq 1/\delta$ in $(t - \varepsilon, t + \varepsilon) \cap [a,b]$ for $p$ sufficiently large. Thus
\[
\frac{\varphi_p}{\vert \varphi_p \vert} \rightarrow \frac{\varphi}{\vert \varphi \vert} \text{ uniformly in }(t - \varepsilon, t + \varepsilon) \cap (a,b).
\]
Also, since $\delta \leq \vert \varphi_p \vert\, \leq 1/\delta$ in $(t - \varepsilon, t + \varepsilon) \cap (a,b)$, it holds in this interval that $\vert \varphi_p \vert^{1/(p-1)} \rightarrow 1$ uniformly as $p \rightarrow \infty$. \\
Combining these convergence results, \eqref{eq:EL2} is obtained at the points in $(a,b)$ where $\vert \varphi \vert\, \neq 0$. Because $[a,b] \subset (0,L)$ was arbitrary and \eqref{eq:EL2} holds wherever $\varphi = 0$, it must therefore hold a.e. in $(0,L)$.

\section{Properties of $\infty$-elastica and Existence of $\mathcal{K}_\infty$ Minimisers}\label{sec:props}

\noindent In this section we examine the limiting system of equations \eqref{eq:EL1}--\eqref{eq:EL2} and how they relate to solutions of our problem. We prove the remainder of the first statement of Theorem \ref{thm:limitingeqns}-- that the limiting vector field $\varphi$ is not identically zero-- and we finish the proof of Theorem \ref{thm:limitingeqns} by proving the second statement: that if a curve admits a solution of \eqref{eq:EL1}--\eqref{eq:EL2}, then its curvature may take on at most two values. We show the existence of minimisers of $\mathcal{K}_\infty$.

\begin{proof}[Proof of Theorem \ref{thm:limitingeqns} continued]

\noindent To finish proving statement 1) of Theorem \ref{thm:limitingeqns}, it remains only to show that $\varphi$ is not identically zero. To this end, let the number $0 < \delta < L/2$ be given and take a function $\eta \in C^\infty([0,L])$ such that $\eta(0) = \eta(L) = 1$, $\vert \eta \vert \,\leq 1$, and $\text{supp}(\eta) \subseteq [0,\delta] \cup [L - \delta, L]$. Choose a sequence of vector fields $(Z_p)$ with each vector field $Z_p$ defined along the curve $\gamma_p$ such that $Z_p(0) = Z_p(L) = 0$ with $\nabla_{T_p} Z_p = T_p$ on $[0,\delta] \cup [L - \delta, L]$ (such a choice is made possibly by the standard existence theorem for ODEs with initial data). Moreover, take the vector fields $Z_p$ to have continuous covariant derivatives up to second order on $(0,L)$. \\
By the definitions of $\varphi_p$ and $Z_p$,
\begin{align*}
K_p^{1-p} \int_0^L \eta \kappa_p^p \, \mathrm{d}s &= \int_0^L \eta \langle \varphi_p, \nabla_{T_p}^2 Z \rangle \, \mathrm{d}s \\
&= \left[ \eta \langle \varphi_p, \nabla_{T_p} Z_p \rangle \right]_0^L - \int_0^L \eta' \langle \varphi_p, \nabla_{T_p} Z_p \rangle + \eta \langle \nabla_{T_p} \varphi_p, \nabla_{T_p} Z_p \rangle \, \mathrm{d}s \\
&=  - \int_0^L \eta \langle \nabla_{T_p} \varphi_p, \nabla_{T_p} Z_p \rangle \, \mathrm{d}s \\
&= -\left[ \eta \langle \nabla_{T_p} \varphi_p, Z_p \rangle \right]_0^L + \int_0^L \eta' \langle \nabla_{T_p} \varphi_p, Z_p \rangle + \eta \langle \nabla_{T_p}^2 \varphi_p, Z_p \rangle \, \mathrm{d}s \\
&= \int_0^L \eta' \langle \nabla_{T_p} \varphi_p, Z_p \rangle \, \mathrm{d}s + \int_0^L \eta \langle \nabla_{T_p}^2 \varphi_p, Z_p \rangle \, \mathrm{d}s \\
&= \left[ \eta' \langle \varphi_p, Z_p \rangle \right]_0^L - \int_0^L  \eta'' \langle \varphi_p, Z_p \rangle + \eta' \langle \varphi_p, \nabla_{T_p} Z_p \rangle \, \mathrm{d}s + \int_0^L \eta \langle \nabla_{T_p}^2 \varphi_p, Z_p \rangle \, \mathrm{d}s \\
&= - \int_0^L  \eta'' \langle \varphi_p, Z_p \rangle \, \mathrm{d}s + \int_0^L \eta \langle \nabla_{T_p}^2 \varphi_p, Z_p \rangle \, \mathrm{d}s,
\end{align*}
having used integration by parts as well as the facts that $Z$ vanishes at the boundary and $\nabla_{T_p} Z_p = T_p$ is orthogonal to $\varphi_p$. \\
Substituting in for $\nabla_{T_p}^2 \varphi_p$ using the Euler-Lagrange equation \eqref{eq:ELnorm} then integrating by parts turns this expression into
\begin{align*}
K_p^{1-p} \int_0^L \eta \kappa_p^p \, \mathrm{d}s = &- \int_0^L  \eta'' \langle \varphi_p, Z_p \rangle \, \mathrm{d}s - \int_0^L \eta \langle R(\varphi_p,T_p)T_p, Z_p \rangle \, \mathrm{d}s - \sigma\int_0^L \eta d_p \langle \nu_p, Z_p \rangle \, \mathrm{d}s \\
&+ L \lambda_p \int_0^L \eta d_p \langle \nabla_{T_p} T_p, Z_p \rangle \, \mathrm{d}s + \frac{2p-1}{p-1} \int_0^L \eta \langle \varphi_p, \nabla_{T_p} T_p \rangle  \, \mathrm{d}s \\
&+ \frac{2p-1}{p-1} \int_0^L \eta' \langle \varphi_p, \nabla_{T_p} T_p \rangle \langle T_p, Z_p \rangle  \, \mathrm{d}s + \frac{\sigma}{2} \int_0^L \eta (d_p^2)' \langle T_p, Z_p \rangle \, \mathrm{d}s  \\
&+ \frac{\sigma}{2}\int_0^L \eta d_p^2 \langle \nabla_{T_p} T_p, Z_p \rangle \, \mathrm{d}s,
\end{align*}
or equivalently after some manipulation
\begin{align*}
\frac{p}{p-1} K_p^{1-p} \int_0^L \eta \kappa_p^p \, \mathrm{d}s = &\, \int_0^L  \eta'' \langle \varphi_p, Z_p \rangle \, \mathrm{d}s + \int_0^L \eta \langle R(\varphi_p,T_p)T_p, Z_p \rangle \, \mathrm{d}s + \sigma\int_0^L \eta d_p \langle \nu_p, Z_p \rangle \, \mathrm{d}s \\
&- L\lambda_p \int_0^L \eta d_p \langle \nabla_{T_p} T_p, Z_p \rangle \, \mathrm{d}s - \frac{2p-1}{p-1} \int_0^L \eta' \langle \varphi_p, \nabla_{T_p} T_p \rangle \langle T_p, Z_p \rangle  \, \mathrm{d}s \\
&- \frac{\sigma}{2}\int_0^L \eta (d_p^2)' \langle T_p, Z_p \rangle \, \mathrm{d}s - \frac{\sigma}{2}\int_0^L \eta d_p^2 \langle \nabla_{T_p} T_p, Z_p \rangle \, \mathrm{d}s \\
= &\, I_1 + I_2 + I_3 + I_4 + I_5 + I_6 +I_7.
\end{align*}
Now, suppose the limiting $\varphi$ is identically zero, so that $\varphi_p$ converges uniformly to 0 on compact subintervals of $(0,L)$. Set $\eta$ so that the supports of $\eta'$ and of $\eta''$ are contained in such a subinterval, denoted by $[a,b]$. Using the fact that $\vert Z_p \vert \, \leq \delta$ on $\text{supp} \eta$ by construction, along with the same techniques we have used to bound integrals earlier in this paper, we obtain the inequalities
\begin{align*}
I_1, I_5 &\lesssim C_\eta \delta \varepsilon_p , \\
I_2, I_3, I_4, I_6, I_7 &\lesssim \delta
\end{align*} 
follow
, where $\varepsilon_p$ represents a term that decays to 0 as $p \rightarrow \infty$.
The constant $C_\eta$ denotes the quantity $\max( \norm{\eta'}_{L^\infty}, \norm{\eta''}_{L^\infty})$.
\noindent The inequality
\begin{equation}\label{eq:concentrationineq}
K_p^{1-p} \int_0^L \eta \kappa_p^p \, \mathrm{d}s \leq C \left( \delta + C_\eta \delta \varepsilon_p  \right)
\end{equation}
follows, where $C$ is some constant independent of both $p$ and $\eta$. \\

\noindent On the other hand, the uniform convergence of $\varphi_p$ to 0 on $[a,b]$ means that for large $p$, the inequality
\begin{equation*}
\norm{\varphi_p}_{L^{p'}([a,b])} \leq \epsilon \leq \frac{1}{2} \norm{\varphi_p}_{L^{p'}(0,L)}
\end{equation*}
holds. That is, at least half of the ``$p'$ mass'' of $\varphi_p$ is concentrated at the tails $(0,a)$ and $(b,L)$ of $(0,L)$. It is then true that
\[
\norm{\eta^{1/p'} \varphi_p}_{L^{p'}(0,L)} \geq \norm{\eta^{1/p'} \varphi_p}_{L^{p'}((0,L) \backslash [a,b])} = \norm{\varphi_p}_{L^{p'}((0,L) \backslash [a,b])} \geq \frac{1}{2}  \norm{\varphi_p}_{L^{p'}(0,L)} = \frac{L^{1/p'}}{2},
\]
having used the definition of $\varphi_p$ to compute its  $L^{p'}$ norm. Therefore
\[
K_p^{-p} \int_0^L \eta \kappa_p^p \, \mathrm{d}s   \geq \frac{L^{1/p'}}{2},
\]
which means
\[
K_p^{1-p} \int_0^L \eta \kappa_p^p \, \mathrm{d}s   \geq \frac{K_1 \min(1,L)}{2}.
\]
Choosing $\delta$ sufficiently small and $p$ large enough (to control the $C_\eta \delta \varepsilon_p$ term, since $C_\eta \delta$ may grow large when $\delta \rightarrow 0$) gives a contradiction to inequality \eqref{eq:concentrationineq}. \\

\noindent The only step left is to prove statement 2) of Theorem \ref{thm:limitingeqns}.\\
Where $\varphi \neq 0$, it is immediate from equation \eqref{eq:EL2} that $\vert \nabla_T T \vert = K$. It remains therefore to show that $\nabla_T T = 0$ almost everywhere on the set $\Sigma = \varphi^{-1}(\{ 0 \} )$.
Consider two different cases depending on the value of $\lambda$. \\
First, assume that $\lambda \neq 0$. By Rademacher's theorem and the fact that $W^{1,\infty} = C^{0,1}$, we see that the second derivative $\nabla_T^2 \varphi$ exists in the classical sense almost everywhere in $(0,L)$ and so it makes sense to interpret $\nabla_T^2 \varphi$ pointwise. Substituting \eqref{eq:EL2} into \eqref{eq:EL1}, we see that at almost every given point $s_1 \in \Sigma$ the equation
\begin{equation}\label{eq:measzeroproof}
\nabla_{T}^2 \varphi = L \lambda \nabla_{T} T - 2K \vert \varphi \vert' T
\end{equation}
holds. If $\nabla_{T}^2 \varphi(s_1) = 0$, taking the inner product with $\nabla_T T$ in \eqref{eq:measzeroproof} shows that $\nabla_T T(s_1) = 0$. If instead $\nabla_{T}^2 \varphi(s_1) \neq 0$,
it follows that $s_1$ is an isolated point of $\Sigma$ and can therefore be ignored for the purposes of this proof. \\
Now, assume instead that $\lambda = 0$. Substituting \eqref{eq:EL2} into \eqref{eq:EL1} as before gives us the equation
\[
\nabla_{T}^2 \varphi = - R(\varphi, T)T  - 2K \vert \varphi \vert' T - 2K \vert \varphi \vert \nabla_T T,
\]
which implies the inequality
\[
\vert \nabla_{T}^2 \varphi \vert \,\leq C\left( \vert \varphi \vert  + \vert \varphi \vert' \right)
\]
after using some of the bounds established earlier. Writing out the expression for $\nabla_T^2 \varphi$ in local co-ordinates
\[
\nabla_{T}^2 \varphi = \varphi^i{}'' \vecpar{i} + 2 \varphi^i{}'T^j\Gamma_{ji}^k\vecpar{k} + \varphi^i \left( T^j \Gamma_{ji}^k \right)'\vecpar{k} + \varphi^i T^j \Gamma_{ji}^k T^m \Gamma_{mk}^s \vecpar{s},
\]
rearranging to get an expression for $\varphi''$ then using the triangle inequality and equivalence of norms implies the inequality
\[
\vert \varphi'' \vert\, \leq C\left( \vert \varphi \vert  + \vert \varphi' \vert \right).
\]
where the norms here are taken to be the Euclidean norms of the vector $\varphi$ and its derivatives, viewed as vectors in $\mathbb{R}^n$ via local co-ordinates. From this we may obtain the (Euclidean) inequality
\[
\vert \phi \vert' \,\leq \tilde{C} \vert \phi \vert
\]
for $\phi = (\varphi, \varphi')$. By Gr\"{o}nwall's inequality and the fact that $\varphi$ is not identically zero it follows that the only zeroes of $\varphi$ must be isolated.
\end{proof}

\noindent All the results we have established so far apply to $\infty$-elastica, and therefore to minimisers of $\mathcal{K}_\infty$. However, we have not yet shown that such minimisers actually exist; the following remark rectifies this and ensures that the analysis of this paper is meaningful.
\begin{rem}\label{rem:minexist}
Under the conditions of the problem described in Section \ref{sec:intro}, there exists a minimiser of $\mathcal{K}_\infty$.
\end{rem}
\noindent Since the proof of this remark is so similar to the proofs already contained in this paper, we do not go into full detail here but rather give a sketch. Consider a sequence of curves $(\gamma_p)$ minimising the $\mathcal{K}_p$ functional (that is, the $\mathcal{J}_p^\sigma$ functional with $\sigma = 0$, i.e. no penalisation term). Using arguments similar to those contained in Sections \ref{sec:approx}--\ref{sec:eqns}, we obtain the (subsequential) convergence to a limiting curve $\gamma_\infty$ satisfying conditions \eqref{eq:bcs}--\eqref{eq:length} with $\mathcal{K}_\infty \left[ \gamma_\infty \right] = \lim_p \mathcal{K}_p \left[ \gamma_p \right]$. A straightforward manipulation of inequalities shows that there cannot exist any admissible curves such that the $L^\infty$ norm of their curvature is less than that of $\gamma_\infty$, or else there would exist $p$ such that $\gamma_p$ does not minimise $\mathcal{K}_p$, a contradiction. This shows that $\gamma_\infty$ is in fact a minimiser of $\mathcal{K}_\infty$. \\

\noindent In general we cannot expect that minimisers of $\mathcal{K}_\infty$ are unique; indeed, we can generalise the Euclidean constructions from \cite[Section~1]{moser2019structure} to create examples where non-uniqueness is guaranteed. For example, when the boundary conditions \eqref{eq:bcs} are symmetric with respect to some isometry of $M$ yet no admissible curves exist which are invariant under this isometry, there are necessarily multiple minimisers. \\



\noindent \textbf{Acknowledegements.} Ed Gallagher is grateful for being funded by a studentship from the EPSRC, project reference 2446338.

\newpage
\bibliographystyle{unsrt}
\bibliography{bibliography}

\begin{thebibliography}{10}

\bibitem{aronsson1965minimization}
Gunnar Aronsson.
\newblock Minimization problems for the functional $\sup_x {F} (x, f (x),
  f'(x))$.
\newblock {\em Arkiv f{\"o}r Matematik}, 6(1):33--53, 1965.

\bibitem{aronsson1966minimization}
Gunnar Aronsson.
\newblock Minimization problems for the functional $\sup_x {F} (x, f (x),
  f'(x))$.(ii).
\newblock {\em Arkiv f{\"o}r matematik}, 6(4-5):409--431, 1966.

\bibitem{aronsson1969minimization}
Gunnar Aronsson.
\newblock Minimization problems for the functional $\sup_x {F} (x, f (x),
  f'(x))$.(iii).
\newblock {\em Arkiv f{\"o}r matematik}, 7(6):509--512, 1969.

\bibitem{aronsson2010certain}
Gunnar Aronsson.
\newblock On certain minimax problems and {P}ontryagin’s maximum principle.
\newblock {\em Calculus of Variations and Partial Differential Equations},
  37(1):99--109, 2010.

\bibitem{aronsson2012variational}
Gunnar Aronsson and EN~Barron.
\newblock ${L}^\infty$ variational problems with running costs and constraints.
\newblock {\em Applied Mathematics \& Optimization}, 65(1):53--90, 2012.

\bibitem{katzourakis2019existence}
Nikos Katzourakis and Roger Moser.
\newblock Existence, uniqueness and structure of second order absolute
  minimisers.
\newblock {\em Archive for Rational Mechanics and Analysis}, 231(3):1615--1634,
  2019.

\bibitem{katzourakis2020second}
Nikos Katzourakis and Tristan Pryer.
\newblock Second-order ${L}^\infty$ variational problems and the
  $\infty$-polylaplacian.
\newblock {\em Advances in Calculus of Variations}, 13(2):115--140, 2020.

\bibitem{moser2012minimizers}
Roger Moser and Hartmut Schwetlick.
\newblock Minimizers of a weighted maximum of the {G}auss curvature.
\newblock {\em Annals of Global Analysis and Geometry}, 41(2):199--207, 2012.

\bibitem{sakellaris2017minimization}
Zisis~N Sakellaris.
\newblock Minimization of scalar curvature in conformal geometry.
\newblock {\em Annals of Global Analysis and Geometry}, 51(1):73--89, 2017.

\bibitem{huang2004note}
Rongpei Huang.
\newblock A note on the p-elastica in a constant sectional curvature manifold.
\newblock {\em Journal of Geometry and Physics}, 49(3-4):343--349, 2004.

\bibitem{oldfather1933leonhard}
WA~Oldfather, CA~Ellis, and Donald~M Brown.
\newblock Leonhard {E}uler's elastic curves.
\newblock {\em Isis}, 20(1):72--160, 1933.

\bibitem{moser2019structure}
Roger Moser.
\newblock Structure and classification results for the $\infty$-elastica
  problem, 2019.

\bibitem{singer2008lectures}
David~A Singer.
\newblock Lectures on elastic curves and rods.
\newblock In {\em AIP Conference Proceedings}, volume 1002, pages 3--32.
  American Institute of Physics, 2008.

\end{thebibliography}

\end{document}